\newtheorem{theorem}{Theorem}[section]
\newtheorem{lemma}[theorem]{Lemma}
\newtheorem{proposition}[theorem]{Proposition}
\theoremstyle{definition}
\newtheorem{definition}[theorem]{Definition}
\newtheorem{corollary}[theorem]{Corollary}
\theoremstyle{remark}
\numberwithin{equation}{section}
\providecommand{\bs}{\backslash}
\providecommand{\ve}{\varepsilon}
\providecommand{\C}{\mathbb{C}}
\providecommand{\ba}{\backslash}
\providecommand{\be}{\beta}\providecommand{\e}{\epsilon}
\providecommand{\cC}{\mathcal{C}}
\providecommand{\de}{\delta}
\providecommand{\dG}{\delta_\Gamma}
\providecommand{\G}{\Gamma}
\providecommand{\g}{\gamma}
\providecommand{\La}{\Lambda}
\providecommand{\Q}{B}
\providecommand{\Pb}{{B}}
\providecommand{\q}{b}
\providecommand{\R}{\mathbb{R}}
\providecommand{\bH}{\mathbb{H}}
\providecommand{\Hn}{\mathbb{H}^n}
\providecommand{\Om}{\Omega}
\providecommand{\cO}{\mathcal{O}}
\providecommand{\tOm}{{\Om}}
\providecommand{\bms}{m^{\mathrm{BMS}}_\G}
\providecommand{\op}{\operatorname}
\providecommand{\br}{m^{\mathrm{BR}}_\G}
\providecommand{\PSL}{\mathop{\rm PSL}}
\providecommand{\T}{\operatorname{T}}
\providecommand{\GmodGa}{\Gamma \backslash G}
\providecommand{\br}{\mathbb R}
\providecommand{\pH}{\partial_\infty(\bH^n)}
\providecommand{\BMS}{\op{BMS}}
\providecommand{\BR}{\op{BR}}
\begin{document}
\title[Equidistribution]{On the distribution of orbits of geometrically finite
 hyperbolic groups on the boundary\\
(with appendix by Fran\c cois Maucourant)}

\author{Seonhee Lim }
\address{Department of Mathematics, Seoul National University, Seoul, 151-747, Korea}
\email{slim@snu.ac.kr,}

\author{Hee Oh}
\address{Mathematics department, Brown university, Providence, RI
and Korea Institute for Advanced Study, Seoul, Korea}

\email {heeoh@math.brown.edu}
\thanks{The authors are supported in part by Korean NRF 0409-20100101 and by 
NSF Grant 0629322 respectively}
\address{Universit\'e Rennes I, IRMAR, Campus de Beaulieu 35042 Rennes cedex -  France}
\email{francois.maucourant@univ-rennes1.fr}



\begin{abstract}
We investigate the distribution of orbits of a non-elementary discrete hyperbolic
subgroup $\G$ acting on $\Hn$ and its geometric boundary $\partial_\infty(\bH^n)$.
In particular, we show that if $\G$ admits a finite
Bowen-Margulis-Sullivan measure (for instance, if $\G$ is geometrically finite), then
every $\G$-orbit in $\pH$ is equidistributed with respect to the Patterson-Sullivan measure supported on the limit set $\Lambda(\Gamma)$.
The appendix by Maucourant is the extension of a part of his thesis where
 he obtains the same result as a simple application of Roblin's theorem.

Our approach is via establishing the equidistribution of
solvable flows on the unit tangent bundle of $\G\ba \bH^n$, which is of independent interest.
\end{abstract}

\maketitle
\section{Introduction}

Let $G$ be the group
of orientation preserving isometries
of the hyperbolic space $\bH^n$ and $
\G <G$ a torsion-free non-elementary (=not virtually abelian) discrete subgroup.
The action of $\G$ extends to $\overline{\bH^n}:=\bH^n \cup
\partial_\infty(\bH^n)$ where $\partial_\infty(\bH^n)$ denotes
the geometric boundary of $\bH^n$, and we define
the limit set $\Lambda(\G)$ as the set of accumulation points of a $\G$-orbit
in $\overline{\bH^n}$.
 


If we denote by $\delta_\G$ the critical exponent of $\G$,
then there exists a $\G$-invariant conformal density $\{\nu_x:x\in \bH^n\}$ of 
dimension $\delta_\G$ on $\Lambda(\G)$ by
 Patterson \cite{Patterson1976} for $n=2$ and Sullivan \cite{Sullivan1979} for $n$ general.
We consider the Bowen-Margulis-Sullivan measure $m^{\BMS}_\G$ on the unit tangent bundle
 $\op{T}^1(\Gamma\ba \bH^n)$ associated to the density $\{\nu_x\}$
(Def.~\ref{def:BMS}).  When the total mass $|m^{\BMS}_\G|$ finite,
 the geodesic flow is ergodic  on $\T^1(\G\ba \bH^n)$
\cite{Sullivan1979}.


For a subset $\Omega\subset \pH$ and $x
 \in \bH^n$,
we denote by $S_{x}(\Omega)\subset \bH^n$ the set of all points lying in geodesics emanating from $x$ toward $\Omega$, and by $B_T(x)\subset \bH^n$ the hyperbolic ball of radius $T$ centered at $x$.

Our main theorem is the following:
\begin{theorem}\label{thm:main}
Suppose that the total mass $|m_\G^{\BMS}|$ is finite.
Let $\Omega_1$ and $ \Omega_2$
be Borel subsets of $\partial_\infty(\bH^n)$ whose
boundaries are of zero Patterson-Sullivan measure.
Then, for any $x, y\in \bH^n$ and $\xi\in \pH$, as $T \to \infty$,
\begin{align*}\label{eqn:main}
\# \{  \gamma^{-1}(y)\in S_{x}(\Om_1)\cap B_T(x):
\;\; \g (\xi) \in \Om_2\} \sim \frac{
\nu_x (\Om_1)\nu_y (\Om_2)  }{\de_\G \cdot |\bms |}
\cdot  e^{\de_\G T}.
\end{align*}
\end{theorem}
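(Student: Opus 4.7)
The plan is to transfer the combined orbital count into an integrated equidistribution statement on $\G\backslash G$ with respect to $\bms$, and then upgrade to a pointwise asymptotic by invoking the equidistribution of solvable flows on $(\G\backslash G,\bms)$ established earlier in the paper. I would set up Cartan coordinates by picking $g_x, g_y \in G$ with $g_x\cdot o = x$ and $g_y\cdot o = y$ for a reference point $o\in\bH^n$, and decomposing $g_x^{-1}\g^{-1}g_y = k^+_\g\,a_{t_\g}\,(k^-_\g)^{-1}$ in $KA^+K$. Then $t_\g = d(x,\g^{-1}y)$, while $k^+_\g M$ and $k^-_\g M$ in $K/M$ record, via the visual identifications $K/M\cong\pH$ based at $x$ and at $y$ respectively, the two endpoints of the geodesic through $x$ and $\g^{-1}y$. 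The cone condition $\g^{-1}y\in S_x(\Om_1)$ thus becomes $k^+_\g M\in\tilde\Om_1$ for a subset $\tilde\Om_1\subset K/M$. Since $a_{t_\g}$ contracts $\pH$ onto a single point as $t_\g\to\infty$, one checks that for any fixed $\xi$ the condition $\g(\xi)\in\Om_2$ is, up to a correction that vanishes once $t_\g$ is large, equivalent to $k^-_\g M\in\tilde\Om_2$ for a corresponding $\tilde\Om_2\subset K/M$; this uniform replacement is why the asymptotic does not depend on $\xi$.

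With continuous bump approximants $\psi_i$ to $\mathbf{1}_{\tilde\Om_i}$, set
\begin{equation*}
N_T(g):=\sum_{\g\in\G}\mathbf{1}_{[0,T]}(t_\g(g))\,\psi_1(k^+_\g(g))\,\psi_2(k^-_\g(g)),
\end{equation*}
so that the theorem asks for the asymptotic of $N_T(g_0)$ at the specific $g_0$ encoding $(x,y)$. Integrating $N_T$ against $\bms$ on $\G\backslash G$ and unfolding the sum produces a single integral of $\psi_1\otimes\psi_2\cdot\mathbf{1}_{[0,T]}$ against the $\G$-invariant lift $\tilde\bms$ of $\bms$ to $G$. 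Expressing $\tilde\bms$ in Cartan coordinates, the conformal density appearing in the definition of $\bms$ transmutes the angular measures on the two $K/M$ factors into the PS measures $\nu_x$ and $\nu_y$, and the radial Haar density is replaced by $e^{\dG t}\,dt$, yielding
\begin{equation*}
\int_{\G\backslash G}N_T\,d\bms\;\sim\;\frac{\nu_x(\Om_1)\,\nu_y(\Om_2)}{\dG}\,e^{\dG T}.
\end{equation*}
The last step is to convert this integrated asymptotic into the pointwise one by using the solvable-flow equidistribution on $(\G\backslash G,\bms)$ to show that $N_T(g)/e^{\dG T}$ tends to the claimed constant uniformly on compact sets; dividing the integrated identity by $|\bms|$ recovers the value at $g_0$ and produces the factor $(\dG\cdot|\bms|)^{-1}$ in the main statement.

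The main obstacle is precisely this last equidistribution step in the infinite-volume (thin) regime: the horospherical foliations are not $\bms$-volume-preserving, the transverse leaf measures are supported on the limit set $\La(\G)$, and convergence is only available against $\bms$-continuity sets. The mechanism behind it is to disintegrate $\bms$ along the stable and unstable horospheres and to exploit the conformal Jacobian $e^{-\dG\beta_\xi(\cdot,\cdot)}$ governing the action of the geodesic flow on the transverse leaf measures, so that mixing of $a_t$ with respect to $\bms$ translates into equidistribution of the horospheric, and hence solvable, orbits. This is exactly why the hypothesis that $\partial\Om_i$ has PS-measure zero cannot be dropped: it is used both in the leafwise convergence and in the standard sandwich argument passing from the smooth cutoffs $\psi_i$ and smoothed time intervals back to the sharp indicators $\mathbf{1}_{\Om_i}$ and $\mathbf{1}_{[0,T]}$.
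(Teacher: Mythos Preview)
Your outline has two genuine gaps. First, the solvable equidistribution in the paper (Theorem~\ref{thm:6.1}) is not a statement about $\bms$: for $\psi\in C_c(\G\backslash G)$ one has
\[
\int_{B_0(T,\Om)}\psi(b)\,d\rho_\ell(b)\;\sim\;\frac{e^{\delta_\G T}\,\nu_x(\Om(\xi_x))}{\delta_\G\,|\bms|}\cdot m^{\BR}_\G(\psi),
\]
so the limiting functional is the \emph{Burger--Roblin} measure, with $|\bms|$ appearing only as a normalization. Your plan to integrate $N_T$ against $\bms$ on $\G\backslash G$ and unfold does not yield $\nu_x\otimes\nu_y\otimes e^{\delta_\G t}dt$: $\tilde m^{\BMS}$ is expressed in Hopf coordinates $(u^+,u^-,t)$ as $d\nu_x(u^+)d\nu_x(u^-)dt$, and for $u=k_1a_tk_2X_0$ the endpoints $u^\pm=k_1a_tk_2(\xi_x^\pm)$ depend on all three Cartan factors. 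The claim that Cartan coordinates ``transmute'' the angular factors into $\nu_x$ and $\nu_y$ is essentially Roblin's theorem (Theorem~1 of the Appendix), not a coordinate identity, and cannot be used as an input here. For the same reason your ``integrated-to-pointwise'' step is circular: it presupposes a uniformity of $N_T(g)/e^{\delta_\G T}$ that is exactly what needs to be proved.

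Second, the paper does \emph{not} approximate $\gamma(\xi)\in\hat\Om_2$ by a Cartan condition on $k^-_\gamma$. It encodes this condition \emph{exactly} via the decomposition $G=K^gBg$ with $B=gB_0g^{-1}$ the stabilizer of $g(\xi)$: writing $\gamma=\gamma_{K^g}\gamma_B g$, the condition becomes $\gamma_{K^g}\in\Om_2$. One then builds $F^\pm_\eta\in C_c(\G\backslash G)$ as (bump on $K^g$)$\times$(bump on $B$), sandwiches the count by $\int_{B_0(T\pm\e,\Om_{1,\e}^\pm)}F^\pm_\eta\,d\rho_\ell$ (Lemma~\ref{lem:thicken1}), applies Theorem~\ref{thm:6.1}, and finally computes $m^{\BR}_\G(F^\pm_\eta)\to\nu_y(\hat\Om_2)$ by a delicate change of variables using the conformality relation $e^{-\delta_\G\beta_{k\xi}(y,x)}d\nu_x(k\xi)=d\nu_y(k\xi)$ (Lemma~\ref{aux}); this last lemma is where $\nu_y$ actually emerges, and it is nontrivial precisely because $m^{\BR}_\G$ is not $G$-invariant. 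The approximate replacement you propose is the route taken in the Appendix, but there it requires the separate linear-recurrence estimate of Proposition~1 and then invokes Roblin's theorem as a black box---neither of which your sketch supplies.
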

If $\G$ is geometrically finite, that is,
if the unit neighborhood of
the convex core{\footnote{The \textit{convex core} $\cC_\G\subset \G\ba \bH^n$ is defined to be the minimal convex set which contains all geodesics connecting any two points in $\La(\G)$.
}} $\cC_\G$ has finite volume,
then $|m^{\BMS}_\G|<\infty$ \cite{Sullivan1984}.
However the above theorem is not restricted only to those groups as there are
 geometrically infinite
groups with $|m^{\BMS}_\G|<\infty$ (see \cite{Peigne2003}).

We remark that the assumption of
$|m^{\BMS}_\G|<\infty$ implies that the conformal density $\{\nu_x\}$ is determined uniquely
 up to homothety
(see \cite[Coro.1.8]{Roblin2003}).

\begin{figure}
\begin{center}
\includegraphics[width=9cm]{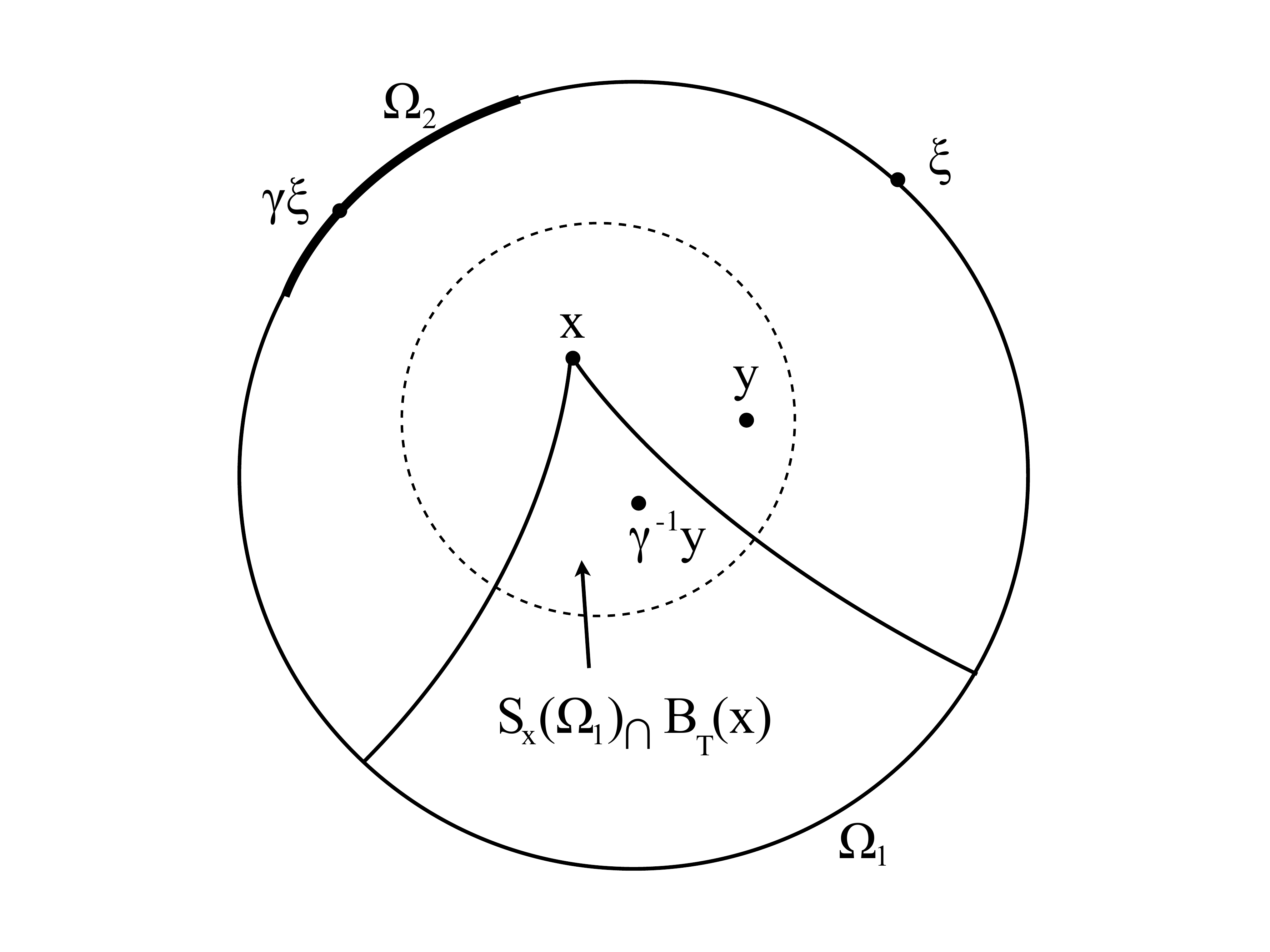}
\caption{Orbits of $\G$ on $\mathbb{H}^n \times \partial_\infty(\bH^n)$}
\end{center}
\end{figure}

\vspace{-2pt}

When $\Omega_1=\Omega_2=\partial_\infty(\bH^n)$, the above counting
problem is simply the non-Euclidean lattice point counting problem, and was solved
 by Lax and Phillips \cite{LaxPhillips} for geometrically finite groups with $\delta_\G >(n-1)/2$.
 Theorem \ref{thm:main} for $\Omega_2=\partial_\infty(\bH^n)$
 is due to Roblin \cite{Roblin2003}.
When $\G$ is a lattice,
 the same type of orbital counting result for  $\Omega_2=\partial_\infty(\bH^n)$
 was obtained in a much more
 general setting of Riemannian symmetric spaces
 (see \cite{Margulisthesis}, \cite{Bartels1982}, \cite{DukeRudnickSarnak1993}, \cite{EskinMcMullen1993}, etc.).
 Theorem \ref{thm:main} for general $\Omega_1,\Omega_2$
 was proved in \cite{GorodnikOh2007} for all lattices in semisimple Lie groups
 (see also \cite{GorodnikMaucourant} for the case when $\Omega_1=\partial_\infty(\bH^n)$).



We highlight Theorem \ref{thm:main} for the M\"obius transformation action of $\PSL_2(\C)$, that is,
the action on the extended complex plane $\widehat \C=\C\cup \{\infty\}$ by
$$\begin{pmatrix} a &b\\ c& d\end{pmatrix} (z)=\frac{az +b}{cz +d}$$
where $a,b,c,d\in \C$ with $ad-bc=1$ and $z\in \widehat \C$.
In the upper half-space model $\bH^3=\{(x,y,r): r>0\}$ of the hyperbolic $3$-space
with the metric $d=\frac{\sqrt{dx^2+dy^2+dr^2}}{r}$,
 the M\"obius transformations by elements of $\PSL_2(\C)$ give rise to all orientation preserving isometries
 of $\bH^3$.






For $g=\begin{pmatrix} a &b\\ c& d\end{pmatrix}\in \PSL_2(\C)$,
we have $$\cosh(d(g(j),j))=\frac{ |a|^2+|b|^2+|c|^2+|d|^2}{2} ,$$
where $j=(0,0,1)$.
Hence the following follows from Theorem \ref{thm:main}:
\begin{corollary}\label{thm:motivation}
Let $\G<\PSL_2(\C)$ be a non-elementary geometrically finite discrete subgroup.
For any Borel subset $\Om$ of $\widehat{\C}$ with $\nu_j(\partial(\Omega))=0$, we have,
as $T \to \infty$,
\[ \# \left\{ \begin{pmatrix} a &b\\ c& d\end{pmatrix}\in \G: |a|^2+|b|^2+|c|^2+|d|^2
<2 \cosh T, \;\;\frac{az+b}{cz+d} \in \Om \right\} \sim \frac{ |\nu_j| \cdot \nu_j(\Om)}{ \delta_\G \cdot |\bms|}\cdot  {e^{\de_\G T}} .\]
\end{corollary}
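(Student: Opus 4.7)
The plan is to deduce the corollary as a direct specialization of Theorem~\ref{thm:main} to the upper half-space model of $\bH^3$ with basepoint $j = (0,0,1)$. The hypothesis that $\G<\PSL_2(\C)$ is geometrically finite guarantees $|\bms|<\infty$ by Sullivan's theorem cited above, so Theorem~\ref{thm:main} is available.

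I would apply Theorem~\ref{thm:main} with $x = y = j$, $\Omega_1 = \widehat{\C} = \partial_\infty(\bH^3)$, $\Omega_2 = \Omega$, and $\xi = z$. Since $\Omega_1$ is the entire boundary, every geodesic through $j$ exits at some point of $\Omega_1$, so $S_j(\Omega_1) = \bH^3$ and $S_j(\Omega_1)\cap B_T(j) = B_T(j)$. The boundary $\partial\Omega_1 = \partial(\widehat{\C})$ is empty and $\nu_j(\partial\Omega)=0$ by hypothesis, so the regularity assumptions of Theorem~\ref{thm:main} are met. The resulting asymptotic constant is
$$\frac{\nu_j(\widehat{\C})\,\nu_j(\Omega)}{\delta_\Gamma\,|\bms|} = \frac{|\nu_j|\,\nu_j(\Omega)}{\delta_\Gamma\,|\bms|},$$
which matches the constant in the corollary.

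It remains to rewrite the counting set in matrix form. The condition $\gamma^{-1}(j)\in B_T(j)$ reads $d(j,\gamma^{-1}(j)) < T$, and by isometry invariance of the hyperbolic distance this equals $d(\gamma(j),j)<T$. Using the identity $\cosh d(\gamma(j),j) = \tfrac{1}{2}(|a|^2+|b|^2+|c|^2+|d|^2)$ recorded in the excerpt together with the strict monotonicity of $\cosh$ on $[0,\infty)$, this is equivalent to $|a|^2+|b|^2+|c|^2+|d|^2 < 2\cosh T$. The second condition $\gamma(\xi)\in\Omega_2$ with $\xi=z$ and $\Omega_2=\Omega$ is precisely $\frac{az+b}{cz+d}\in\Omega$ by the formula for the M\"obius action of $\PSL_2(\C)$, completing the identification of the two sets.

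No substantive obstacle is expected: the content of the corollary is entirely absorbed by Theorem~\ref{thm:main}, and what remains is purely bookkeeping to match conventions. The only care needed is the $\gamma\leftrightarrow\gamma^{-1}$ reconciliation via isometry invariance of the distance, and the conversion of a hyperbolic ball into a matrix-norm ball via the $\cosh$ formula.
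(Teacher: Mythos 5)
Your proposal is correct and matches the paper's own (very brief) derivation: the paper likewise specializes Theorem~\ref{thm:main} to $x=y=j$, $\Omega_1=\widehat{\C}$, and converts $d(\gamma(j),j)<T$ into the matrix-norm condition via the stated identity $\cosh(d(g(j),j))=\tfrac12(|a|^2+|b|^2+|c|^2+|d|^2)$. The bookkeeping steps you spell out (the $\gamma\leftrightarrow\gamma^{-1}$ switch by isometry invariance, $\nu_j(\widehat{\C})=|\nu_j|$, and finiteness of $|\bms|$ from geometric finiteness) are exactly what the paper leaves implicit.
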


A similar result holds for the linear fractional transformation
action of non-virtually cyclic and finitely generated subgroups of $\PSL_2(\R)$ on $\widehat{\R}$.

\bigskip

After the submission, we were pointed out by the referee that in F. Maucourant's
thesis \cite{Mau}, Theorem \ref{thm:main} was already proved
 in the case when the sector is taken to be the whole ball (i.e.,
 $\Omega_1=\partial_\infty (\mathbb{H}^n)$) and that his approach which elegantly uses
 a theorem of Roblin \cite[Theorem 4.11]{Roblin2003} can be extended to obtain
  Theorem \ref{mmmm} of the Appendix.
 As Maucourant's result is not published, Maucourant agreed to write an appendix
 on his result.

Our approach is different from his, as we do not rely on
the aforementioned theorem of Roblin but on a recent result of Oh and Shah (see Theorem \ref{thm:OhShah}).
In section 2, we obtain the main ergodic  theorem which is the equidistribution of solvable flows
(Theorem \ref{thm:6.1}) which is of independent interest. In section 3, we relate the counting
function in Theorem \ref{thm:main} with an average over a solvable flow of a certain
function on $\op{T}^1(\G\ba \bH^n)$ (Lemma \ref{lem:thicken1}) and then apply the results in section 2 to
conclude Theorem \ref{thm:main}. Some computations such as Lemma \ref{aux}
are a bit tricky due to the fact that
 the Burger-Roblin measure $m_\G^{\BR}$ is not
an invariant measure in general.

 This approach of establishing
the equidistribution of $\G$-orbits on the boundary via the study of solvable flows on $\op{T}^1(\G\ba \bH^n)$
was first used in \cite{GorodnikOh2007}.

\bigskip
\noindent{\bf Acknowledgment:} We thank Thomas Roblin for useful comments.

\section{Equidistribution of solvable flows}\label{s:equidistribution}
For $x,y \in \Hn$ and $\xi\in \pH$, the \textit{Busemann function} $\be$ is defined as follows:
 \[ \beta_{\xi}(x,y) = \underset{t \to \infty}{\lim} \{ d(x, \xi_t)-d(y,\xi_t)\}. \]
where $\xi_t$ is a geodesic ray toward $\xi$.

For a unit tangent vector $u \in \op{T}^1(\Hn)$,
 we denote  by $\pi(u)$ the base point of $u$ and by $u^+$ (resp. $u^-$) the forward (resp. backward)
 endpoint of the geodesic determined by $u$.

Let $\G$ be a non-elementary discrete subgroup
of $G=\op{Isom}^+(\bH^n)$. Let  $\{\nu_x :x\in \bH^n\}$
denote a Patterson-Sullivan density for $\G$, i.e., each $\nu_x$ is a finite measure
supported on $\pH$ satisfying:
 for any $x,y\in \bH^n$, $\xi\in \partial_\infty(\bH^n)$ and $\gamma\in
\G$,
$$\gamma_*\nu_x=\nu_{\gamma x};\quad\text{and}\quad
 \frac{d\nu_y}{d\nu_x}(\xi)=e^{-\delta_\G \beta_{\xi} (y,x)}, $$
where $\gamma_*\nu_x(R)=\nu_x(\gamma^{-1}(R))$.

\begin{definition}\label{def:BMS}
{\rm The \textit{Bowen-Margulis-Sullivan measure} $\bms$ associated to $\{\nu_x\}$
 (\cite{Bowen1971}, \cite{Margulisthesis}, \cite{Sullivan1984})
is defined as the measure induced on $\op{T}^1(\G\ba \bH^n)$ of the following $\G$-invariant  measure $\widetilde{m}^{\mathrm{BMS}}$ on $\op{T}^1(\Hn)$:
\[ d \widetilde{m}^{\mathrm{BMS}} (u) =  e^{\dG \be_{u^+}(x, \pi(u))}e^{\dG \be_{u^-}(x, \pi(u))} d\nu_x(u^+) d\nu_x(u^-)dt .\]}
\end{definition}

We  denote by $\{m_x:x\in \bH^n\}$ a
$G$-invariant conformal
density of dimension $n-1$, which is unique up to homothety.

\begin{definition}{\rm The \textit{Burger-Roblin measure} $\br$ associated to
 $\{\nu_x\}$ and $\{m_x\}$ (\cite{Burger1990}, \cite{Roblin2003}) is defined as the measure induced on $\op{T}^1(\G \bs \Hn)$ of the following $\G$-invariant measure $\widetilde{m}^{\mathrm{BR}}$ on $\op{T}^1(\Hn)$:
\[ d \widetilde{m}^{\mathrm{BR}} (u) =  e^{(n-1) \be_{u^+}(x, \pi(u))}e^{\dG \be_{u^-}(x, \pi(u))} dm_x(u^+) d\nu_x(u^-) dt .\] }
\end{definition}

The
measure $\widetilde{m}^{\mathrm{BR}}$ is supported on the set of unit tangent vectors $u$
such that $u^-$ belongs to the limit set $\La_\G$.
\bigskip

We fix  $x\in \bH^n$ and
$\xi\in \pH$ in the rest of this section.  Let $K$ be the stabilizer of $x$ in $G$
 and $P$ denote the stabilizer of $\xi\in \pH$.
The subgroup $P$ is a minimal parabolic subgroup of $G$ and is the normalizer of its
 unipotent radical $N$.
Without loss of generality, we may assume that $m_x$ is the probability measure.

 Denote by $X_0\in \op{T}^1(\bH^n)$ the unit vector based at $x$ such that $X_0^-=\xi$.
 We set $$\xi_x:=X_0^+ .$$
 Setting $A=\{a_t:=\exp(tX_0): t\in \R \} ,$
we have   (cf. \cite[Lem. 4.1]{GorodnikOh2007})
\begin{itemize}
\item $G=KA^+K$  where
  $A^+:=\{a_t:t\ge 0\}$;
\item $P=MAN$ where $M$ is the centralizer of $A$ in $K$ and $M=K\cap P$;
\item $N$ is the expanding horospherical subgroup of $G$ with respect to $A^+$, i.e.,
 $N=\{g\in G: a_tga_{-t}\to e\quad \text{as $t\to \infty$}\}$.
 \end{itemize}

The above Cartan decomposition $G=KA^+K$
 says that for any $g \in G$, there exists a unique element $a \in A^+$ such that $g = k_1 a k_2$, for $k_1, k_2 \in K$. Moreover,
 $k_1a k_2 = k'_1 a k'_2$
implies that $k_1 = k'_1 m $ and $k_2 = m^{-1}k'_2$ for some $m\in M$.

We may identify $G/K$ with $\bH^n$
where $gK$ corresponds to $g(x)$ and $G/M$ with $\op{T}^1(\bH^n)$ where $gM$ corresponds to
$g(X_0)$.

Let $B_0$ be the maximal split
solvable subgroup of $G$ given by $$B_0=AN .$$

 For $T>0$ and a subset $\Omega\subset K$ with $\Omega M=\Omega$, set
\[ B_0(T, \Omega) : =  B_0 \cap \Omega A_T^+ K  \]
where $A_T^+:=\{a_t: 0\le t\le T\}$.
Our aim in this section is to prove an equidistribution of $B_0(T,\Omega)$
on $\op{T}^1(\G\ba \bH^n)$: Theorem \ref{thm:6.1}.

The following is the main ergodic ingredient we use.
\begin{theorem}\cite{OhShahGFH}\label{thm:OhShah} Suppose that
$|m_\G^{\BMS}|<\infty$.
Let $\Om$ be a Borel subset of $K$ with $\Om M= \Om$ and
with $\nu_x(\partial(\Om(\xi_x)))=0$.
 For any $\varphi \in C_c(\G \bs G)^M$,
$$ e^{(n-1-\de_\G)t}  \int_{s \in \Om/M} \varphi(sa_t) dm_x(s) \sim
\frac{\nu_x(\Om(\xi_x))}{|\bms |}\cdot  m^{\BR}_\G(\varphi)  \quad\text{ as $t \to  +\infty$}.$$
\end{theorem}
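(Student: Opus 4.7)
The plan is to derive this asymptotic from Babillot's mixing theorem for the geodesic flow on $(\op{T}^1(\G\ba\bH^n), \bms)$, combined with a horospherical thickening argument. Since $|\bms|<\infty$, the mixing theorem asserts that for $f_1, f_2 \in C_c(\G\ba G)^M$,
$$\int_{\G\ba G} f_1(g)\, f_2(g a_t)\, d\bms(g) \;\longrightarrow\; \frac{\bms(f_1)\,\bms(f_2)}{|\bms|} \quad (t\to +\infty).$$
The goal is to rewrite the LHS of the theorem --- an integral of $\varphi(\cdot\, a_t)$ against the smooth visual measure $m_x$ on $K/M$ --- as a BMS-type integral to which this mixing applies, and to identify the limit as $\br(\varphi)$ up to the stated constants.

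After a $K$-partition-of-unity reduction to $\Omega$ small, I would parametrize $\Omega/M \subset K/M$ locally by the horospherical subgroup whose orbit of $\xi_x$ is open in $\pH$, so that $s\mapsto s\xi_x$ identifies $\Omega/M$ with the shadow $\Omega(\xi_x)\subset \pH$ and $m_x$ becomes a smooth density on this horospherical. Simultaneously I would thicken $\varphi$ along the opposite (strong-stable) horospherical: for a bump $\rho_\epsilon$ supported near $e$ with $\int \rho_\epsilon = 1$, set $\tilde\varphi_\epsilon(g):=\int \varphi(gh)\rho_\epsilon(h)\,dh$. After substituting $\tilde\varphi_\epsilon$ and moving $a_t$ through $h$ via the conjugation identity $h a_t = a_t (a_{-t} h a_t)$, with a change of variable whose Jacobian is $e^{\pm(n-1)t}$ (the contraction/expansion rate of $\op{Ad}(a_{\pm t})$ on the relevant horospherical), the LHS is transformed into an integral over a product of the two horospherical subgroups.

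The crucial step is to recognize this transformed integral as $\int F_\epsilon(g)\,\varphi(g a_t)\,d\bms(g)$, via the BMS local product structure $d\bms \sim d\nu_x(u^-)\, d\nu_x(u^+)\, dt$ in horospherical-time coordinates. Two conversions enter: replacing Haar-on-horospherical (coming from $m_x$) by $\nu_x$ (needed by $\bms$) on the $u^+$-leaf costs, by the Patterson--Sullivan shadow lemma, precisely a factor $e^{(n-1-\de_\G)t}$ --- this is the renormalization in the statement; on the other hand, the $\rho_\epsilon$-thickening of $\bms$ along the opposite horospherical produces $\br$ in the limit, since $\br$ differs from $\bms$ exactly by replacing $\nu_x$ with $m_x$ on the $u^+$-leaf. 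Applying BMS mixing, the factor $\bms(F_\epsilon)$ tends to $\nu_x(\Omega(\xi_x))$ by the shadow lemma as $\epsilon \to 0$, while the other factor produces $\br(\varphi)$, giving the claimed limit.

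The main obstacle is the quantitative comparison between the smooth measure $m_x$ and the PS measure $\nu_x$ via the shadow lemma, which is sharp over the convex core but delicate near cusps in the geometrically finite case; uniform control of cusp contributions in both $t$ and $\epsilon$ is essential, and it is precisely here that the Oh--Shah analysis goes beyond the lattice case. A secondary difficulty is the final approximation step, where the hypothesis $\nu_x(\partial(\Omega(\xi_x)))=0$ must be used to pass from continuous bump approximations back to the indicator $\mathbf{1}_\Omega$ without error blow-up near the boundary of the shadow.
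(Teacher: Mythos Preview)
This statement is not proved in the present paper at all: it is Theorem~\ref{thm:OhShah}, quoted verbatim from the Oh--Shah preprint \cite{OhShahGFH} and used as the main ergodic input. There is therefore no proof in this paper to compare your proposal against.

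That said, your outline is a fair sketch of the strategy actually used in \cite{OhShahGFH} (and, in the lattice case, going back to Margulis and Eskin--McMullen, and in the infinite-covolume case to Roblin): thicken the $K$-orbit transversally along $A$ and the contracting horospherical $N^-$, push by $a_t$, apply mixing of the geodesic flow with respect to $\bms$ (Babillot/Rudolph under $|\bms|<\infty$ and non-arithmetic length spectrum), and read off the limit. Two points deserve sharpening. First, the factor $e^{(n-1-\de_\G)t}$ does not come from the shadow lemma per se but from the exact Radon--Nikodym relation between the $K$-invariant visual measure $m_x$ on a small ball of the sphere and the PS-measure $\nu_x$ on its $a_t$-translate, via the Busemann cocycle; this is an identity, not an estimate, once one writes both $\bms$ and $\br$ in $N^-AK$ (or $NAM$) coordinates. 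Second, and more seriously, mixing alone gives the result only for $\varphi$ of compact support tested against a \emph{compactly supported} transversal; the passage from a small transversal to the full integral over $\Om/M$ requires a uniformity-in-the-basepoint statement, and in the geometrically finite (non-lattice) setting this is exactly where the work lies. Your last paragraph correctly flags this, but ``shadow lemma near cusps'' is not quite the mechanism: what is needed is control of the PS-measure of thin shadows combined with a quantitative non-divergence/recurrence argument for the $K a_t$-orbit, which is the substantive content of \cite{OhShahGFH}.
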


By the Iwasawa decomposition $G=ANK$, the map
\[ K \longrightarrow B_0\ba G : k \mapsto B_0k \]
is a diffeomorphism, say, $\iota$.
 Let $N^-$ be the contracting horospherical subgroup of $G$
 with respect to $A^+$: $N^-=\{g\in G: a_{-t}ga_{t}\to e\quad \text{as $t\to \infty$}\}$.

 The map $M \times N^-\to
B_0\ba G$, $mn\mapsto B_0 mn$, composed with $\iota^{-1}$,
 is an
$M$-equivariant map $M \times N^- \to K$ which is a diffeomorphism onto
its image, which is a Zariski open subset. Let $S$ be the image of
$\{e \}\times N^-  $ under this map. We note that the complement of $M\ba MS$
in $M\ba K$ is a point.



\begin{lemma}\label{go} Let $s\in S$.
 If $V\subset S$ is a neighborhood of $s$  and $S_0$ is a compact subset of $S$,
there exists $C=C(S_0)>1$ such that for any $m\in M$,
$$  MS_0 m \subset  MV s^{-1} m a_{-t} \quad\text{for all $t>C$.}$$

\end{lemma}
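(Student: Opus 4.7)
The plan is to reduce the assertion to the uniform contraction $a_{-t} n a_t \to e$ on compact subsets of $N^-$ as $t \to +\infty$, which holds because $N^-$ is the contracting horospherical subgroup for $A^+$ (equivalently, $\op{Ad}(a_{-t})$ acts on $\op{Lie}(N^-)$ with all eigenvalues of modulus strictly less than $1$). First I would use the diffeomorphism $\phi: N^- \to S$, $n \mapsto \iota^{-1}(B_0 n)$, to lift the compact set $S_0 \subset S$ to a compact set $\tilde S_0 := \phi^{-1}(S_0) \subset N^-$, on which the conjugation $n \mapsto a_{-t} n a_t$ converges uniformly to the identity.

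The key algebraic identity is the following: given $s_0 \in S_0$, let $n_0 = \phi^{-1}(s_0) \in \tilde S_0$; by construction $B_0 s_0 = B_0 n_0$, so $s_0 = b_0 n_0$ for some $b_0 \in B_0$. Then
\[
s_0\, a_t\, s \;=\; b_0\, a_t\,(a_{-t}\, n_0\, a_t)\, s \;=\; (b_0\, a_t) \cdot n_t \cdot s, \qquad n_t := a_{-t}\, n_0\, a_t \in N^-,
\]
which shows $B_0\, s_0\, a_t\, s = B_0\, n_t\, s$, since $b_0 a_t \in B_0$. By compactness of $\tilde S_0$ together with the uniform contraction, $n_t \to e$ uniformly in $n_0 \in \tilde S_0$ as $t \to +\infty$; continuity of $\iota^{-1}$ on $B_0\backslash G$ then forces $\iota^{-1}(B_0\, s_0\, a_t\, s) \to \iota^{-1}(B_0\, s) = s$ in $K$, uniformly in $s_0 \in S_0$.

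Given the neighborhood $V$ of $s$, this uniform convergence produces a threshold $C = C(S_0) > 1$ (depending only on $S_0$, not on the external $m$) such that $\iota^{-1}(B_0\, s_0\, a_t\, s) \in MV$ for every $t > C$ and every $s_0 \in S_0$. Unwinding: for each such $s_0$ there exist $b \in B_0$, $m' \in M$ and $v \in V$ with $b\, s_0\, a_t\, s = m'\, v$, i.e.\ $s_0 = b^{-1}\, m'\, v\, s^{-1}\, a_{-t}$; multiplying on the left by an arbitrary $m_1 \in M$ and on the right by $m$, and exploiting $[M,A] = 1$ to commute $a_{-t}$ past $m$, yields $m_1\, s_0\, m \in B_0 \cdot M V\, s^{-1}\, m\, a_{-t}$. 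The asserted inclusion $MS_0\, m \subset MV\, s^{-1}\, m\, a_{-t}$ is precisely this containment interpreted modulo the left $B_0$-action under the identification $K \cong B_0\backslash G$ supplied by $\iota$, which is the sense in which the lemma is invoked in the subsequent equidistribution argument. The principal technical point I expect is the uniformity of the contraction $a_{-t}\, n_0\, a_t \to e$ in $n_0 \in \tilde S_0$; this is not truly difficult but is the unique place where the compactness hypothesis on $S_0$ enters, and it follows from the strict exponential contraction of $\op{Ad}(a_{-t})$ on $\op{Lie}(N^-)$ together with compactness.
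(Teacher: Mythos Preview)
Your proof is correct and follows essentially the same idea as the paper's: both arguments rest on the contraction $a_{-t} n a_t \to e$ on $N^-$ (equivalently, the expansion $a_t(\cdot)a_{-t}$ of a neighborhood of $e$), and both read the conclusion as an inclusion of left $B_0$-cosets via the Iwasawa identification $K \cong B_0\backslash G$. Your write-up is in fact a bit more careful than the paper's, which tacitly identifies $S\subset K$ with $N^-$ when it asserts ``$S_0\subset a_t V s^{-1} a_{-t}$'' and then passes to $B_0 M S_0 m \subset B_0 M V s^{-1} m a_{-t}$; you make this identification and the $B_0$-coset interpretation explicit.
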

\begin{proof}
Since $e\in V s^{-1}$,
the conjugation by $a_t$ expands $Vs^{-1}\subset S$ by the factor of $e^t$, and hence
we can find  $C>1$
such that
$$S_0\subset a_t Vs^{-1}a_{-t}$$
for all $t>C$.
Hence
\begin{align*}  B_0MS_0m
 \subset B_0M  a_{t}Vs^{-1}a_{-t}m
=B_0MVs^{-1} ma_{-t} 
\end{align*}
as $a_t\in B_0$.

By the uniqueness of the decomposition $G=B_0K$, we have the desired inclusion.
\end{proof}




We denote by $dh$ the Haar measure on $G$
such that for $h=k_1a_tk_2\in KA^+K$,
$$dh=2^{n-1}(\sinh t\cosh t)^{(n-1)/2} dk_1 dtdk_2$$
where $dk$ denotes the probability Haar measure on $K$.

We denote by $\rho_\ell$ the left-invariant Haar measure on $B_0$
given by the relation:
 $$dh=d\rho_{\ell}(b) dk$$
 where $h=bk\in B_0K$.


In the rest of this section, we assume that
$|m_\G^{\BMS}|<\infty$.

The following lemma is a special case of \cite[Prop. 3.1]{Roblin2000}:
\begin{lemma}\label{rud}
Any sphere centered at $\xi_0\in \Lambda(\G)$ has
measure zero with respect to $\nu_x$.
\end{lemma}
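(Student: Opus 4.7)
The plan is to obtain this from the conformality of $\{\nu_x\}$ together with the fact that $\xi_0\in\La(\G)$ is a limit point. Fix a visual metric $\rho_x$ on $\pH$ based at $x$, so a sphere of radius $r$ centered at $\xi_0$ is $S_r(\xi_0)=\{\eta\in\pH:\rho_x(\xi_0,\eta)=r\}$.

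\textbf{Step 1 (soft countability).} The family $\{S_r(\xi_0)\}_{r>0}$ consists of pairwise disjoint Borel subsets of the finite measure space $(\pH,\nu_x)$, so the set of bad radii $\mathcal{R}:=\{r>0:\nu_x(S_r(\xi_0))>0\}$ is at most countable. Equivalently, the monotone function $r\mapsto\nu_x(\overline{B}(\xi_0,r))$ is continuous off a countable set.

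\textbf{Step 2 (ruling out all bad radii).} To upgrade $\mathcal{R}$ to $\emptyset$, I would use the dynamics of $\G$ near $\xi_0$. Since $\xi_0\in\La(\G)$, there exists $\g_n\in\G$ with $\g_n^{-1}(x)\to\xi_0$, and $G$-equivariance of the visual metric gives
\[ \g_n\, S_r(\xi_0)=S_r\!\big(\g_n\xi_0;\,\rho_{\g_n x}\big). \]
Combined with the pointwise conformal comparison of $\rho_x$ and $\rho_{\g_n x}$ (a factor of $\exp(-\tfrac12(\be_\xi(\g_n x,x)+\be_\eta(\g_n x,x)))$) and the Radon--Nikodym formula
\[ \nu_x\!\big(\g_n\,S_r(\xi_0)\big)=\int_{S_r(\xi_0)} e^{-\dG\,\be_\eta(\g_n^{-1}x,x)}\,d\nu_x(\eta), \]
a single sphere of positive mass propagates under the $\G$-action into a one-parameter family of Borel sets of positive mass, each closely approximating a genuine $\rho_x$-sphere of continuously varying radius about $\g_n\xi_0$. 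A further averaging over the approach direction recasts this as an uncountable family of positive-mass $\rho_x$-spheres about $\xi_0$ itself, contradicting Step~1.

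\textbf{Main obstacle.} The hardest step is the quantitative geometric reconciliation of the transformed set $\g_n\,S_{r_0}(\xi_0)$ with a genuine $\rho_x$-sphere: a $\rho_{\g_n x}$-sphere is not literally a $\rho_x$-sphere, and the conformal distortion has to be controlled carefully near a possibly parabolic or horospherical $\xi_0$. A cleaner alternative sidesteps Step~2 entirely by invoking Sullivan's global shadow lemma: for radial $\xi_0$ one has $\nu_x(B(\xi_0,r))\asymp r^{\dG}$ with continuous implied constants, while for bounded parabolic $\xi_0$ the hypothesis $|\bms|<\infty$ (which forces $\dG>\mathrm{rank}(\xi_0)/2$) yields the analogous continuous asymptotic. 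In either case $r\mapsto\nu_x(B(\xi_0,r))$ is continuous, and $\nu_x(S_r(\xi_0))=0$ follows immediately.
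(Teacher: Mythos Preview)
The paper does not prove this lemma itself; it simply records it as a special case of \cite[Prop.~3.1]{Roblin2000}, whose argument runs through the ergodic/conservative properties of the geodesic flow with respect to the Bowen--Margulis--Sullivan measure. So there is no in-paper proof to compare with, and what matters is whether your independent argument actually closes.

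It does not. Step~1 is fine, but Step~2 is only a wish: you yourself flag that $\g_n S_{r_0}(\xi_0)$ is a $\rho_{\g_n x}$-sphere, not a $\rho_x$-sphere, and the passage from ``closely approximating a genuine $\rho_x$-sphere'' to ``an uncountable disjoint family of positive-mass $\rho_x$-spheres about $\xi_0$'' is never carried out. Without an exact identification (not an approximation) you cannot feed these sets back into the countability argument of Step~1, because approximate spheres need not be pairwise disjoint.

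Your fallback via the shadow lemma also fails, for two independent reasons. First, the two-sided bound $c\,r^{\dG}\le \nu_x(B(\xi_0,r))\le C\,r^{\dG}$ does \emph{not} imply that $r\mapsto \nu_x(B(\xi_0,r))$ is continuous: a monotone function can satisfy such a pinching and still jump (e.g.\ $F(r)=c\,r^{\dG}$ for $r<r_0$ and $F(r)=C\,r^{\dG}$ for $r\ge r_0$). The phrase ``continuous implied constants'' does not buy anything here. Second, the dichotomy ``radial or bounded parabolic'' is a feature of geometrically finite groups; the standing hypothesis in this paper is only $|m_\G^{\BMS}|<\infty$, under which the limit set may contain points of neither type, so your case analysis is incomplete even in principle. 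To get a self-contained proof you would need to go through the ergodic-theoretic route as in Roblin.
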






\begin{proposition}\label{prop:6.6}  Let $V$ be an open
neighborhood of $e$ in $K$ such that $MV=V$.
Let $\Om$ be a Borel subset of $K$ with $\Om M= \Om$ and
with $\nu_x(\partial(\Om(\xi_x)))=0$. Then for any $\psi \in C_c(\GmodGa)$,
$$  \int_V \int_{B_0(T, \Om)} \psi (bk) d\rho_{\ell} (b) dk \sim
 \frac{e^{\de_\G T} \nu_x(\Om( \xi_x))}{\de_\G\cdot |\bms |}\cdot
\br(\psi *\chi_V), $$ as $T \to \infty,$ where $ \psi
* \chi_{V}(h) =\int_{k\in V} \psi(h k ) dk $
and $B_0(T, \Omega) : =  B_0 \cap \Omega A_T^+ K$.
\end{proposition}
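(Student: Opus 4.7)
The plan is to rewrite the double integral as a single integral of $\psi$ against Haar measure on the set $B_0(T,\Om)V\subset G$, re-parameterize via the Cartan decomposition $G=KA^+K$, and then invoke Theorem \ref{thm:OhShah}.

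First, using $dh=d\rho_\ell(b)\,dk$ on $G=B_0K$ together with Fubini, the left-hand side equals $\int_{B_0(T,\Om)V}\psi(h)\,dh$. Uniqueness of the Iwasawa decomposition ($B_0\cap K=\{e\}$) ensures that $B_0(T,\Om)V$ is exactly the set of $h\in G$ whose Iwasawa $B_0$-part lies in $B_0(T,\Om)$ and whose Iwasawa $K$-part lies in $V$. Next, parameterize using $h=k_1a_tk_2$. Writing $k_1a_t=b(k_1,t)\,u(k_1,t)$ in Iwasawa form, the Iwasawa $K$-part of $h$ is $u(k_1,t)k_2$. Combined with $\Om M=\Om$, the characterization above becomes: $k_1\in\Om$, $0\le t\le T$, and $k_2\in u(k_1,t)^{-1}V$. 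Applying the Cartan integration formula $dh=2^{n-1}(\sinh t\cosh t)^{(n-1)/2}dk_1\,dt\,dk_2$ and substituting $k_2=u(k_1,t)^{-1}k'$ with $k'\in V$:
$$\int_{B_0(T,\Om)V}\psi(h)\,dh=\int_0^T J(t)\int_\Om \varphi\bigl(b(k_1,t)\bigr)\,dk_1\,dt,$$
where $J(t)=2^{n-1}(\sinh t\cosh t)^{(n-1)/2}$ and $\varphi:=\psi*\chi_V\in C_c(\G\ba G)^M$, the $M$-invariance coming from $MV=V$.

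The heart of the argument is to show that $\varphi(b(k_1,t))$ may be replaced by $\varphi(k_1a_t)$ in the integrand. Decomposing $k_1=m_1s_1\in MS$ (the complement in $K$ being a single $M$-orbit, hence negligible), a direct computation using $s_1\in S\leftrightarrow N^-$ and the contraction $a_{-t}N^-a_t\to e$ shows $u(k_1,t)\to m_1\in M$ uniformly on compacta of $MS$; this is essentially what Lemma \ref{go} encodes. Since $\varphi$ is right $M$-invariant and uniformly continuous on $\G\ba G$, one obtains
$$\varphi\bigl(b(k_1,t)\bigr)=\varphi\bigl(k_1a_tu(k_1,t)^{-1}\bigr)=\varphi(k_1a_t)+o(1)$$
uniformly in $k_1$ outside an arbitrarily small neighborhood of the exceptional set, whose $\nu_x$-negligibility is guaranteed by the hypothesis $\nu_x(\partial\Om(\xi_x))=0$ together with Lemma \ref{rud}.

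Finally, Theorem \ref{thm:OhShah} applied to $\varphi$ gives
$$\int_\Om \varphi(k_1a_t)\,dm_x(k_1)\sim e^{-(n-1-\dG)t}\cdot\frac{\nu_x(\Om(\xi_x))}{|\bms|}\cdot\br(\varphi),$$
and $J(t)\sim e^{(n-1)t}$, so the integrand is asymptotic to $e^{\dG t}\cdot\nu_x(\Om(\xi_x))\cdot\br(\varphi)/|\bms|$. Integrating over $t\in[0,T]$ produces the factor $e^{\dG T}/\dG$, yielding the claimed asymptotic. The main obstacle is to carry out the replacement $\varphi(b(k_1,t))\leadsto\varphi(k_1a_t)$ with uniformity strong enough to survive the $t$-integration; the measure-zero boundary condition on $\Om(\xi_x)$ is precisely what allows the near-boundary exceptional set to be absorbed into an error of lower order.
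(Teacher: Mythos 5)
Your reduction of the left-hand side to $\int_0^T \Xi(t)\int_\Om\varphi(b(k_1,t))\,dk_1\,dt$, with $\varphi=\psi*\chi_V$ and $\Xi(t)=2^{n-1}(\sinh t\cosh t)^{(n-1)/2}$, is correct and is essentially the paper's starting point (the paper keeps the distortion in the domain, writing the constraint as $k_1\in\Om_{k_2}(t)=\Om\cap B_0Vk_2^{-1}a_{-t}$, which encodes the same information). The gap is in the replacement step. You write $\varphi(b(k_1,t))=\varphi(k_1a_t)+o(1)$ and feed only the leading term into Theorem \ref{thm:OhShah}. But this is an \emph{additive} error in the integrand: after multiplying by $\Xi(t)\sim e^{(n-1)t}$ and integrating in $t$, an error of size $\eta$ contributes $O(\eta\,e^{(n-1)T})$, while the main term is only of size $e^{\dG T}$. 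Whenever $\dG<n-1$ (the typical case here), uniform continuity --- which yields $o(1)$, not $o(e^{-(n-1-\dG)t})$ --- cannot make this lower order; even a Lipschitz modulus combined with the exponential contraction of $N^-$ would only give $O(e^{(n-2)T})$. The same objection applies to the exceptional region: bounding its contribution by $\|\varphi\|_\infty$ times the Lebesgue measure of a small neighborhood of the complement of $MS$ in $K$ (a single $M$-orbit) gives $O(\e\,e^{(n-1)T})$, again not $o(e^{\dG T})$. Moreover the negligibility of that orbit comes from atom-freeness of $\nu_x$, not from $\nu_x(\partial(\Om(\xi_x)))=0$, and $\nu_x$-smallness of a set does not control a Lebesgue integral over it, since $\nu_x$ is in general singular with respect to Lebesgue measure on $K/M$.

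The repair --- and this is what the paper's proof is organized around --- is that \emph{every} error term must itself pass through Theorem \ref{thm:OhShah}, so that it emerges carrying the factor $e^{-(n-1-\dG)t}$ times either $\nu_x$ of a small set or $\br$ of a small perturbation of the test function. Concretely, one does not perturb the integrand at all: using Lemma \ref{go} one sandwiches the $t$-dependent domain between fixed sets, $\Om_m(S_\e)\subset\Om_{ms}(t)\subset\Om_m$, where $S_\e$ is chosen via Lemma \ref{rud} so that $\nu_x(\partial(S_\e(\xi_x)))=0$ and Theorem \ref{thm:OhShah} applies to \emph{both} bounding sets; the discrepancy between the two resulting asymptotics is then at most $e^{-(n-1-\dG)t}\,\e\,\br(\cdot)/|\bms|$, which is genuinely negligible as $\e\to0$. (In your formulation the analogous device is to sandwich $\varphi(b(k_1,t))$ between $\inf_{w\in W}\varphi(k_1a_tw)$ and $\sup_{w\in W}\varphi(k_1a_tw)$ for a fixed small neighborhood $W$ of $e$ containing all $u(k_1,t)^{-1}m_1$ with $k_1\in MS_\e$ and $t$ large, apply Theorem \ref{thm:OhShah} to these two fixed $M$-invariant functions, and only then shrink $W$; the region $\Om\setminus MS_\e$ is handled by the majorant $\sup_{u\in K}\varphi(k_1a_tu)$ together with $\nu_x((\Om\setminus MS_\e)(\xi_x))<\e$.) With these changes your argument coincides with the paper's proof.
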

\begin{proof}
Note that
\begin{align*}
B_0(T, \Omega)V
&= B_0V \cap \Om A_T^+K   \\
&=\{k_1 a_{t} k_2 : k_1 \in \Om_{k_2}(t),  0<t<T  ,k_2 \in K \},
\end{align*}
where  $\Om_{k_2}(t) =  \Omega \cap  B_0Vk_2^{-1} a_{-t}$.

Setting $\Xi(t)=2^{n-1}(\sinh t\cosh t)^{(n-1)/2}$, we have \begin{align*}
&\int_{B_0(T, \Om)} \psi (bk)  d\rho_\ell (b)dk\\&=
 \int_{h\in B_0(T, \Om)} \psi (h) dh
\\  &=\int_{ k_1a_{t}k_2 \in B_0(T, \Om)}
 \psi (k_1a_{t}k_2) \Xi(t) dk_2dtdk_1\\
 &= \int_{k_2 \in K} \int_{0<t<T} \int_{k_1 \in \Om_{k_2}(t)}
 \psi (k_1a_{t}k_2) \Xi(t)dk_2dtdk_1.
 \end{align*}

 Set for $m\in M$,
 $$\Om_m:=  \Om  \cap  MS m^{-1}  $$
where $S$ is the image of $N^-$ in $K$.
We note that since $S\subset M\ba K$ is an open Zariski dense subset whose complement is a point and $\nu_x$
  is atom free,
$\nu_x(\Om)=\nu_x(\Om_m)$ and
$\nu_x(\partial(\Om))=\nu_x(\partial(\Om_m))$.
Write $V=MV_0$ for $V_0\subset S$.
Let $k_2=ms\in MS$ with $s\in V_0$.

By Lemma \ref{rud},
 for any fixed $\e>0$, we can
 take a compact subset $S_\e \subset S$
 such that
 $\nu_x(\Om (\xi_x) - S_\e(\xi_x))<\e$ and $\nu_x(\partial(S_\e(\xi_x)))=0$.
If we set $\Om_m(S_\e):=\Om  \cap MS_\e m^{-1}$, then
$\nu_x(\Om_m(\xi_x) - \Om_m(S_\e)(\xi_x))<\e$
and
$\nu_x(\partial(\Om_m(S_\e) (\xi_x)))=0$ since
$\partial(\Om_m(S_\e)(\xi_x))\subset \partial(\Om(\xi_x))\cup \partial(S_\e(\xi_x)))$.

 By Lemma \ref{go},
 there exists $C_\e>1$ such that for all $t>C_\e$,
$$ \Om_m(S_\e) \subset\Om_{ms}( t).$$
On the other hand, as $a_t\in B_0$,
$$\Om_{ms}(t)\subset \Om \cap B_0MVs^{-1}m^{-1}a_{-t}
=\Om \cap B_0M (a_t Vs^{-1}a_{-t}) m^{-1}
\subset \Om\cap MSm^{-1}=\Om_m. $$

Without loss of generality we assume below that $\psi$ is non-negative.
Hence for all $t>C_\e$, $$
\int_{k_1\in \Om_m(S_\e) } \psi (k_1a_tsm )  dk_1\le  \int_{k\in \Om_{ms}(t) }  \psi (k_1a_tms) dk_1
 \le\int_{k_1\in \Om_m } \psi (k_1a_tms)  dk_1
. $$

Note that by applying Theorem \ref{thm:OhShah}
\begin{align*} &\int_{k_1\in \Om_{m}(S_\e)}  \psi (k_1a_tms) dk_1\\
&=  \int_{s\in \Om_{m}(S_\e)/M} \int_{m_1\in M}  \psi (s a_t m_1ms) dm_1dm_x(s)
\\ &= \int_{s\in \Om_{m}(S_\e)/M} \psi_{ms} (s a_t ) dm_x(s)\\
& \sim   e ^{-(n-1-\delta_\G)t} \frac{1}{ |m^{\BMS}_\G|} m^{\BR}_\G(\psi_{{ms}}) \nu_x
 ( \Om_m(S_\e) (\xi_x))
\end{align*}
where $\psi_{ms}(h):=\int_{m_1\in M}\psi(h m_1ms)dm_1$.

Hence
\begin{align*}
&\liminf_t e^{(n-1-\delta_\G )t} \int_{k_1\in \Om_{ms}(t) }
 \psi(k_1a_tms) dk_1 \\
&\ge \liminf e^{(n-1-\delta_\G )t} \int_{k_1\in \Om_{m}(S_\e)}
\psi(k_1a_tms) dk_1\\
&= \frac{1}{|m^{\BMS}_\G|} m^{\BR}_\G(\psi_{{ms}}) \nu_x
 ( \Om_m(S_\e)(\xi_x) )\\
 &\ge \frac{1}{|m^{\BMS}_\G|} m^{\BR}_\G(\psi_{{ms}}) (\nu_x
 ( \Om_m(\xi_x))-\e )
\end{align*}
and similarly
$$ \limsup_t e^{(n-1-\delta_\G )t} \int_{k_1\in \Om_{ms}(t) }
 \psi(k_1a_tms) dk_1 \le
\frac{1}{ |m^{\BMS}_\G|} m^{\BR}_\G(\psi_{{ms}}) \nu_x
( ( \Om_m (\xi_x))  +\e).$$

As $\e>0$ is arbitrary and $\Om_m(\xi_x)=\Om(\xi_x)$,
we deduce
 $$ \lim_{t \to \infty} e^{(n-1-\delta_\G )t} \int_{k\in \Om_{ms}(t) }  \psi(k_1a_tms) dk_1 =
\frac{1}{ |m^{\BMS}_\G|} m^{\BR}_\G(\psi_{{ms}}) \nu_x
 ( \Om(\xi_x) ) .$$

  Using that $\Xi(t)\sim_t e^{(n-1)t}$, we obtain that
     for any $ms\in MV_0$, as $T\to \infty$,
  \begin{align*}
  & \int_{A_{T}^+}\int_{\Om_{ms}(t)}  \psi (k_1a_tms) \Xi(t) dk_1dt
  \\ &\sim \frac{ e^{\delta_\G T} }{\de_\G\cdot |m^{\BMS}_\G|}
 m^{\BR}_\G( \psi_{ms}) \nu_x(\Om(\xi_x))
   .\end{align*}

 Now  for $ms\notin MV_0$,
 we claim that  $$\limsup_T e^{-\delta_\G T}
 \int_{A_{T}^+}\int_{\Om_{ms,t}} \psi (k_1a_tms) \Xi(t) dk_1dt =0 .$$

Consider the set
$$\Om_{ms,t}^c:=\Omega\cap B_0(MS- MV_0) s^{-1}m^{-1}a_{-t} .
$$
As $s\in MS-MV_0$,
we have by the previous case that
 $$\lim_T e^{-\delta_\G T}\int_{A_{T}^+(C)}\int_{\Om_{ms,t}^c}
\psi (k_1a_tms) \Xi(t) dk_1dt
  =\frac{1}{\de_\G\cdot |m^{\BMS}_\G|} m^{\BR}_\G( \psi _{ms}) \nu_x(\Om_m(\xi_x))
   .$$

   Since $\Om_{ms,t}^c\subset \Om_m$
   and $$\lim_T e^{-\delta_\G T}\int_{A_{T}^+(C)}\int_{\Om_{m}}
 \psi (k_1a_tms) \Xi(t) dk_1dt
  =\frac{1}{ \de_\G\cdot |m^{\BMS}_\G|} m^{\BR}_\G( \psi _{ms}) \nu_x(\Om_m(\xi_x)),
   $$
   the claim follows.

  Since the image of $S$ is an open Zariski dense subset of $M\ba K$,
   we may replace $K$ by $M S$ in the integration over $K$ and hence
\begin{align*}
 & \int_{k_2\in K} \int_{A_{T}^+}\int_{k_1\in \Om_{k_2}(t)}
\psi(k_1a_tms) \Xi(t) dk_1dt
 dk_2 \\
& \sim \int_{ms\in MV_0} \int_{A_{T}^+}\int_{k_1\in \Om_{k_2}(t)}
\psi (k_1a_tms) \Xi(t) dk_1dt
 dk_2
\\ &
 \sim \frac{ e^{\delta_\G T}}{\de_\G\cdot |m^{\BMS}_\G|} m^{\BR}_\G
(\psi *\chi_{V}
 ) \nu_x(\Om(\xi_x)).
  \end{align*}
This completes the proof of Proposition~\ref{prop:6.6}.
\end{proof}


\begin{theorem}\label{thm:6.1}
Let $\Om$ be a
Borel subset of $K/M$ with $\nu_x(\partial(\Omega(\xi_x)))=0$.
Then for any $\psi
\in C_c(\G \bs G)$, as $T \to \infty$,
\[
\int_{B_0(T,\Om)} \psi (b) d\rho_\ell(b)  \sim
\frac{ e^{\de_\G T}}{\de_\G} \cdot \frac{\nu_x(\Om(\xi_x))}{|\bms |}\cdot
m_\G^{\BR}(\psi) . \]
\end{theorem}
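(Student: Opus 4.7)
The theorem removes the $\chi_V$ smoothing on the right-hand side of Proposition~\ref{prop:6.6}. The plan is the standard approximate-identity/sandwich argument: bracket $\psi$ between shrinking $V$-averages and take $V \to \{e\}$. Write $F(\psi,T) := \int_{B_0(T,\Omega)} \psi(b)\,d\rho_\ell(b)$; by decomposing $\psi = \psi^+ - \psi^-$ we may assume $\psi \ge 0$.

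\textbf{A priori upper bound.} First I would show that $\limsup_T e^{-\delta_\Gamma T} F(\psi,T) < \infty$ for every $\psi \in C_c^+(\Gamma\backslash G)$. Fix once and for all an $M$-invariant open relatively compact neighborhood $V_0$ of $e$ in $K$. The upper envelope $h \mapsto \sup_{k \in V_0}\psi(hk^{-1})$ is upper semicontinuous with compact support, so is dominated pointwise by some $\Phi \in C_c^+(\Gamma\backslash G)$. By construction $\Phi(bk) \ge \psi(b)$ for all $b \in G$ and $k \in V_0$, so averaging over $V_0$ and integrating over $B_0(T,\Omega)$ yields, via Proposition~\ref{prop:6.6},
\[
|V_0|\,F(\psi,T) \;\le\; \int_{V_0}\int_{B_0(T,\Omega)}\Phi(bk)\,d\rho_\ell(b)\,dk \;\sim\; \frac{e^{\delta_\Gamma T}\nu_x(\Omega(\xi_x))}{\delta_\Gamma\,|\bms|}\,m_\Gamma^{\BR}(\Phi*\chi_{V_0}).
\]

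\textbf{Sandwich.} Given $\epsilon>0$, pick a bump $\tilde\psi \in C_c^+(\Gamma\backslash G)$ with $\tilde\psi \ge \mathbf{1}_{\operatorname{supp}(\psi)V_0^{-1}}$, and choose an $M$-invariant neighborhood $V \subset V_0$ with modulus of continuity $\omega(\psi,V) := \sup_{k\in V,\,h\in G}|\psi(hk)-\psi(h)| < \epsilon$. Since $\psi$ and all its right translates $\psi(\cdot\,k)$, $k\in V$, are supported in $\operatorname{supp}(\psi)V^{-1} \subset \operatorname{supp}(\psi)V_0^{-1}$, one has
\[
\left|\psi(b) - \frac{1}{|V|}\int_V \psi(bk)\,dk\right| \le \epsilon\,\tilde\psi(b), \qquad b\in G.
\]
Integrating over $B_0(T,\Omega)$, applying Proposition~\ref{prop:6.6} to the $V$-average, and invoking the a priori bound for $\tilde\psi$ yields
\[
\left|F(\psi,T) - \frac{e^{\delta_\Gamma T}\nu_x(\Omega(\xi_x))}{\delta_\Gamma\,|\bms|}\cdot\frac{m_\Gamma^{\BR}(\psi*\chi_V)}{|V|}\right| \;\le\; \epsilon\,F(\tilde\psi,T) \;\le\; C\,\epsilon\,e^{\delta_\Gamma T}
\]
for $T$ sufficiently large, with $C$ independent of $V$.

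\textbf{Limit and main obstacle.} Since $\psi$ is uniformly continuous, $\chi_V/|V|$ is an approximate identity, so $m_\Gamma^{\BR}(\psi*\chi_V)/|V| \to m_\Gamma^{\BR}(\psi)$ as $V \to \{e\}$. Taking $\limsup$ and $\liminf$ in $T$ and then letting $V$ shrink drives $\epsilon \to 0$, giving the claimed asymptotic. The main obstacle is exactly \emph{decoupling $\tilde\psi$ from $V$}: one must fix $V_0$ and $\tilde\psi$ \emph{in advance} so that the error constant $C$ stays uniform as $V$ shrinks inside $V_0$; this is why the a priori bound of Step 1 is essential and must be established for all $\psi \in C_c^+$ uniformly. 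A secondary nuisance is the constraint $MV=V$ required by Proposition~\ref{prop:6.6}, handled by keeping $V$ itself $M$-invariant throughout.
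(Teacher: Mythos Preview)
Your proposal is correct and follows essentially the same approximate-identity sandwich as the paper. The paper's execution is a bit more economical: rather than introducing a separate a~priori bound via a dominating $\Phi$ and an auxiliary $\tilde\psi$, it defines the envelopes $\psi_\epsilon^\pm(h):=\sup/\inf_{k\in V_\epsilon}\psi(hk)$ and applies Proposition~\ref{prop:6.6} directly to $\psi_\epsilon^+$ and $\psi_\epsilon^-$; since $\psi(b)\le \psi_\epsilon^+(bk)$ and $\psi(b)\ge \psi_\epsilon^-(bk)$ for every $k\in V_\epsilon$, the sandwich and the upper bound come in one stroke, and the ``decoupling $\tilde\psi$ from $V$'' obstacle you flag simply does not arise.
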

\begin{proof} Let $V_\e$ be an $\e$-neighborhood of $e$ in $K$
 such that $V_\e M=V_\e$.
   For any $\psi \in C_c(\G \bs G)^M$ and $\e>0$, define functions $\psi_\ve^\pm \in C_c(\G\ba G)^M$
as follows:
\[ \psi_\ve^+(h) := \underset{k \in V_\ve}{\sup} \psi(hk), \;\text{and}
\;\; \psi_\ve^-(h) := \underset{k \in V_\ve}{\inf} \psi(hk).\]
 Let $\eta>0$.
  By the uniform continuity of $\psi$ and the $M$-invariance,
    there exists $\e=\e(\eta)$
such that
 $|\psi_\ve^+ (h) - \psi_\e^-(h)|< \eta  $ for all $h\in G$.

Without loss of generality we may assume $\psi \ge 0$. Note that,
  by applying Proposition~\ref{prop:6.6},
  \begin{align*}
 &\limsup_T e^{-\delta_\G T}\int_{B_0( T,\Om)} \psi (b) d\rho_\ell (b) \\&\le
 \limsup e^{-\delta_\G T} \op{Vol}(V_\e)^{-1}
 \int_{k\in V_\e} \int_{B_0( T,\Om)} \psi_\e^+(bk) d\rho_\ell(b) dk
 \\ & = \op{Vol}(V_\e)^{-1}
 \frac{1}{\delta_\G   |m^{\BMS}_\G|} m^{\BR}_\G(\psi_{\e}^+*\chi_{V_\e} )\nu_x(\Om(\xi_x)) .
 \end{align*}
Similarly
 \begin{align*}
&\liminf_T  e^{-\delta_\G T} \int_{B_0( T,\Om)} \psi (b) d\rho_\ell (b) \\
&\ge \liminf_T  e^{-\delta_\G T} \op{Vol}(V_\e)^{-1}
 \int_{k\in V_\e} \int_{B_0( T,\Om)} \psi_\e^-(bk) d\rho_\ell(b) dk
 \\ & = \op{Vol}(V_\e)^{-1}
 \frac{1}{\delta_\G   |m^{\BMS}_\G|} m^{\BR}_\G(\psi_{\e}^-*\chi_{V_\e} )\nu_x(\Om(\xi_x)) .
 \end{align*}

Since
$$ m^{\BR}_\G(\psi_{\e}^\pm *\chi_{V_\e}) = m^{\BR}_\G(\psi)\op{Vol}(V_\e) +O(\eta),$$
we deduce that
$$
\limsup_T e^{-\delta_\G T}\int_{B_0( T,\Om)} \psi (b) d\rho_\ell (b)
=\frac{1}{\delta_\G   |m^{\BMS}_\G|} m^{\BR}_\G(\psi )\nu_x(\Om(\xi_x))
+O(\eta)$$
and
$$
\liminf_T e^{-\delta_\G T}\int_{B_0( T,\Om)} \psi (b) d\rho_\ell (b)
=\frac{1}{\delta_\G   |m^{\BMS}_\G|} m^{\BR}_\G(\psi )\nu_x(\Om(\xi_x))
+O(\eta).$$
As $\eta>0$ is arbitrary,
this proves the claim.
\end{proof}

\section{Proof of Theorem \ref{thm:main}}\label{s:groups}
Fix $x\in \bH^n$ and $\xi\in\pH$. We keep the same
notation from the previous section. Let $y\in \bH^n$ and choose $g\in G$ such that $g(x)=y$.

For a subset $W$ of $G$, we denote by
$W^g$ the conjugate $gWg^{-1}$. Note that $K^g$ is the stabilizer of $y$
and that $B:=B_0^g$ stabilizes $g(\xi)=g(X_0^-)$.

 For $\hat \Om_1, \hat \Om_2 \subset \pH$, we set
  $$\Om_1 :=\{k\in  K/M: k\xi_x \in \hat\Om_1\},\quad
  \Om_2 :=\{k\in K^g/M^g: k(g(\xi))\in \hat \Om_2 \}$$
  so that
  $\hat \Om_1=\Om_1(\xi_x)$ and $\hat \Om_2=\Om_2(g(\xi))$.
We assume that the boundaries of $\hat\Om_i$ have measure zero with respect to the Patterson-Sullivan density.

In this notation, we have
 $$\{z\in S_{x}(\hat \Omega_1), d(z, x)<T\} =
\Omega_1A^+_T(x) $$ and hence the condition
$\gamma^{-1} y \in  S_{x, T}(\hat \Omega_1)$
becomes $\gamma \in g K A^-_T\Omega_1^{-1}$.
And $\gamma (\xi)\in \hat \Om_2$ is equivalent to
$\gamma g^{-1}(g(\xi)) \in \Om_2 (g(\xi))$ and hence to
$\gamma g^{-1} \in \Om_2 B$.

For $h\in G$, we write $$h=h_{K^g}h_B g$$
where $h_{K^g}\in K^g$ and $h_B\in B$ are uniquely determined.

 Hence
setting
 $$B(T,\Om_1)=B\cap g\Om_1A^+_TK g^{-1},$$
 the number we want to count is the following:
\begin{align*}
N_T(\hat \Om_1,
\hat \Om_2) & := \# \{ \g \in \G : \g (g(\xi)) \in \hat \Om_2,
 \g^{-1}(y) \in S_x(\hat \Om_1) \} \\
&=\# \G \cap g K A^-_T\Omega_1^{-1}\cap \Omega_2B g\\
&=\{\gamma\in \G: \gamma_{K^g}\in \Om_2,\gamma_{B}^{-1}\in B(T,\Om_1)\}
.\end{align*}

Let $V_\e$ be an $\e$-neighborhood of $e$ in $K$
 such that $M V_\e M=V_\e$.

For the $\e$-neighborhood $A_\ve=\{a_t: |t|<\e\}$ of $e$ in $A$,
by the strong wavefront Lemma (see \cite{GorodnikShahOhIsrael} or \cite{GorodnikOh2007})
 there exists a symmetric neighborhood $\cO_\ve'$ of $e$ in $G$ and $C>1$
 such that for all $k\in K$ and  all $t>C$,
\begin{equation}\label{st}
g k a_t Kg^{-1} \cO_\ve' \subset g k V_\e a_t A_\e Kg^{-1}.
 \end{equation}

Choose a symmetric neighborhood $\tilde V_\e
\subset V_\e$ so that
\begin{equation}
\label{ve}
\op{Vol}(V_\e^+ - V_\e^-) <\eta \op{Vol}(V_\e)
\end{equation}
where $V_\e^+:=V_\e \tilde V_\e$ and $V_\e^-:=\cap_{u\in \tilde V_\e} V_\e u$.

We may assume without loss of generality that $\cO_\e'$
satisfies
$$a_t n k (g^{-1}\cO_\e' g)\subset a_t A_\e N k \tilde V_\e$$
for all $a_tnk\in ANK$.


We set $$\cO_\e := \cO_\e ' \cap \Pb$$
and note that $\cO_\e^{-1}=\cO_\e$.


Fix $\eta>0$. Then there exists $0<\e(\eta)<\eta$ such that for all $0<\e<\e(\eta)$,
 \begin{equation}\label{ueone}
\nu_x(\Om_{1,\e}^+ (\xi_x) -\Om_{1,\e}^-(\xi_x) )<\eta \end{equation}
where $\Om_{1,\e}^+=\Om_{1}V_\e^+$ and $\Om_{1,\e}^-=\cap_{k\in V_\e^-} \Om_1 k$.
 This is possible
since the boundary of $\hat \Om_1$ has measure zero
 with respect to $\nu_x$.

Similarly, we may assume that
 \begin{equation}\label{ue}
\nu_y(\Om_{2,\e}^+ (g(\xi)) -\Om_{2,\e}^-(g(\xi)) )<\eta \end{equation}
where $\Om_{2,\e}^+=\Om_{2}U_\e^+$ and $\Om_{2,\e}^-=\cap_{k\in U_\e^-} \Om_2 k$
where $U_\e^{\pm}:=g V_\e^{\pm} g^{-1}$.
We also set $U_\e=gV_\e g^{-1}$.

We choose $ \phi^{\pm}_\e=\phi_{\Om_2,\e}^{\pm} \in C_c(K^g)^{M^g}$ such that
$0\le \phi_{\Om_2,\e }^{-} \le \phi_{\Om_2,\e }^{+} \le 1$, $\phi_\e^+(k)=1$ for $k\in \Om_2$,
$\phi^+_\e(k)=0$ for $k\notin \Om_{2}U_\e$,
$\phi^-_\e(k)=1$ for $k\in \cap_{u\in U_\e} \Om_{2}u$, and
$\phi^-_\e(k)=0$ for $k\notin \Om_{2}$.

We denote by $\rho$ the left invariant Haar measure
on $B$ given by: for $\psi\in C_c(B)$,
$$\int_B \psi(b) d\rho(b):=\int_{b_0\in B_0}\psi(g^{-1} b_0g) d\rho_{\ell}(b_0).$$

Choosing a non-negative function $\psi_\e\in C_c(B)$ supported on $\cO_\e$ and
with $\int_B \psi_\e (b)d\rho(b)=1$, we
 define a function $f^{\pm}_{\Om_2,\eta}$ on $G=K^gBg$
 by $$f^{\pm}_{\Om_2, \eta}(h)  =
\phi_{\Om_2,\e(\eta)}^{\pm}(h_{K^g}) \psi_{\e(\eta)} (h_B)$$
where $h=h_{K^g}h_B g\in G$ with
  $h_{K}\in K^g$ and $h_B\in B$ uniquely determined and $\e=\e(\eta)$.
Define
$$F_{\Om_2,\eta}^{\pm} (h) = \underset{\g
\in \G}{\sum} f^\pm_{\Om_2,\eta}(\g h),$$ which is an integrable function defined on
$\G \bs G$.


We set
$$B_0^C(T,\Om_1):=B_0\cap \Omega_1A_T^+(C)K; $$
$$B^C(T,\Om_1):= B\cap g \Omega_1A_T^+(C)K g^{-1};$$
$$N_T^C(\hat \Om_1, \hat \Om_2) :=\# \G \cap
gK A^-_T(C)\Omega_1^{-1} \cap \Omega_2Bg$$
where $A^-_T(C)=\{a_{-t}: C<t<T\}$ and $A^+_T(C)=\{a_t:C<t<T\}$.
When $C=0$, we simply omit the superscript $0$ from the above notation.

Note that
$$N_T^C(\hat \Om_1, \hat \Om_2)=\{\gamma\in \G:\gamma_{K^g}\in \Om_2,
\gamma_B^{-1}\in B^C(T,\Om_1)\}.$$

\begin{lemma}\label{lem:thicken1} Let $C>1$ be taken so that
\eqref{st} holds.
For any
$T>1$ and small $\eta>0$, we have\begin{enumerate}
\item  \[  N_T^C(\hat \Om_1, \hat \Om_2) \leq
\int_{B_0({T+\ve}, \Om^+_{1,\e})} F_{\Om_2,\eta}^+ (b_0)
d\rho_{\ell}(b_0) ; \]
\item \[\int_{B_0^C({T-\ve}, \Om^-_{1,\e})} F_{\Om_2,\eta}^-(b_0)
d\rho_{\ell}(b_0) \leq   N_T(\hat \Om_1, \hat \Om_2)\]
\end{enumerate}
where $\Om^+_{1,\e}= \Om_1 V_\e $ and
$\Om^-_{1,\e}=\cap_{k\in V_{\epsilon}} \Om_1 k$
 and $\e=\e(\eta)$.
\end{lemma}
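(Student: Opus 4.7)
The plan is the standard thickening/unfolding argument. I expand $F^{\pm}_{\Om_2,\eta}(b_0)=\sum_{\gamma}f^{\pm}(\gamma b_0)$, interchange sum and integral, and analyse the contribution of each $\gamma$. Writing $\gamma=\gamma_{K^g}\gamma_B g$, the identity $\gamma b_0=\gamma_{K^g}\bigl(\gamma_B g b_0 g^{-1}\bigr)g$ shows $(\gamma b_0)_{K^g}=\gamma_{K^g}$ and $(\gamma b_0)_B=\gamma_B g b_0 g^{-1}$. After the change of variable $b=gb_0 g^{-1}$, which intertwines $(B_0,\rho_\ell)$ with $(B,\rho)$ and sends $B_0(T',\Om')$ to $B(T',\Om')$, the contribution of $\gamma$ becomes
\[\phi^{\pm}_{\Om_2,\e}(\gamma_{K^g})\int_{B^{\star}(T',\Om^{\pm}_{1,\e})}\psi_\e(\gamma_B b)\,d\rho(b),\]
and left-invariance of $\rho$ yields $\int_B \psi_\e(\gamma_B b)\,d\rho(b)=\int_B\psi_\e\,d\rho=1$ whenever $\gamma_B^{-1}\cO_\e$ sits inside the integration domain.

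For (i), I would argue that each $\gamma$ counted in $N_T^C(\hat\Om_1,\hat\Om_2)$ contributes exactly $1$. Indeed $\gamma_{K^g}\in\Om_2$ gives $\phi^+(\gamma_{K^g})=1$, and $\gamma_B^{-1}=gka_tk'g^{-1}$ with $k\in\Om_1$, $C<t<T$, $k'\in K$. Using $\cO_\e\subset\cO_\e'$ and the strong wavefront inclusion (\ref{st}), I obtain
\[\gamma_B^{-1}\cO_\e\subset gka_t K g^{-1}\cO_\e'\subset gkV_\e a_t A_\e K g^{-1}\subset g\,\Om^+_{1,\e}\,A_{T+\e}^+ Kg^{-1},\]
and since $\gamma_B^{-1}\cO_\e\subset B$, this forces $\gamma_B^{-1}\cO_\e\subset B(T+\e,\Om^+_{1,\e})$. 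Thus the support of $b\mapsto\psi_\e(\gamma_B b)$ sits inside the integration domain, and the contribution equals $1\cdot 1 =1$. Non-negativity of $f^+$ handles the remaining $\gamma$'s and (i) follows.

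For (ii), any $\gamma$ with nonzero contribution must have $\phi^-(\gamma_{K^g})>0$, forcing $\gamma_{K^g}\in\Om_2$ by the construction of $\phi^-_\e$, and must admit some $b\in B^C(T-\e,\Om^-_{1,\e})$ with $\gamma_B b\in\cO_\e$, i.e.\ $\gamma_B^{-1}\in b\,\cO_\e$ (using $\cO_\e=\cO_\e^{-1}$). Writing $b=gka_tk'g^{-1}$ with $k\in\Om^-_{1,\e}$ and $C<t<T-\e$, the strong wavefront lemma, the symmetry of $V_\e$ together with $\Om^-_{1,\e}V_\e\subset\Om_1$, and $a_tA_\e\subset A_{t+\e}^+\subset A_T^+$ give
\[b\,\cO_\e\subset gkV_\e a_t A_\e Kg^{-1}\subset g\,\Om_1 A_T^+ Kg^{-1},\]
so $\gamma_B^{-1}\in B(T,\Om_1)$. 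Hence $\gamma$ is counted in $N_T(\hat\Om_1,\hat\Om_2)$, and its contribution is at most $\phi^-(\gamma_{K^g})\cdot 1\le 1$, yielding (ii).

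The main technical point is arranging the two wavefront inclusions precisely. The cutoff $C$ in $B^C$ is exactly what makes (\ref{st}) applicable to every $a_t$ that appears, the symmetry of $V_\e$ is what makes $\Om^-_{1,\e}V_\e\subset\Om_1$ hold, and the choice $\cO_\e=\cO_\e'\cap B$ ensures that the translated sets remain inside $B$. Once these three are in place, the inequalities reduce to left-invariance of $\rho$ and non-negativity of the integrands.
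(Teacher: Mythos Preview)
Your proposal is correct and follows essentially the same approach as the paper's proof: unfold $F^{\pm}$ over $\Gamma$, pass from $(B_0,\rho_\ell)$ to $(B,\rho)$ via conjugation by $g$, use the $K^gBg$-decomposition to factor $f^{\pm}(\gamma b_0)=\phi^{\pm}(\gamma_{K^g})\psi_\e(\gamma_B b)$, and then invoke the strong wavefront inclusion \eqref{st} together with left-invariance of $\rho$ to show $B^C(T,\Om_1)\cO_\e\subset B(T+\e,\Om^+_{1,\e})$ for (i) and $B^C(T-\e,\Om^-_{1,\e})\cO_\e\subset B(T,\Om_1)$ for (ii). The paper phrases (ii) contrapositively (if $\gamma_B^{-1}\notin B(T,\Om_1)$ then the intersection $\gamma_B^{-1}\cO_\e\cap B^C(T-\e,\Om^-_{1,\e})$ has $\rho$-measure zero), but this is the same argument.
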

\begin{proof}
 For simplicity, we set $F^\pm:=F^\pm_{\Om_2,\eta}$ and $\Om_1^{\pm}:=\Om_{1,\e}^\pm$.
 We have
\begin{align*}&\int_{B_0({T+\ve}, \Om^+_{1,\e})} F^+ (b_0)
d\rho_{\ell}(b_0)\\&=
\int_{B(T+\e, \tOm_1^+)} F^+(g^{-1}bg) d\rho(b)  \\ &\ge
 \int_{B(T+\e, \tOm_1^+)}
\underset{\g \in \G}{\sum}
 \chi_{\Om_2}(\g_{K^g})\psi_\e (\g_\Pb b)d\rho(b)\\
&= \underset{\g \in \G, \; \g_{K^g} \in \Om_2}{\sum}
 \int_{\gamma_BB(T+\e, \tOm_1^+)
 \cap \cO_\e}\psi_\e(b) d\rho(b) \\
\end{align*}
since $\rho$ is left-invariant.
Since we have chosen $\cO_\ve$ so that
$ B^C(T, \tOm_1) \cO_\e\subset
B(T+\e, \tOm_1^+),$
 for any $\g\in \G$ such that $\g_{\Pb}^{-1} \in B^C(T, \tOm_1)$,
\[  B(T+\e, \tOm_1^+) \cap \g_\Pb^{-1}\cO_\e
=\g_\Pb^{-1} \cO_\ve , \]
and hence
$$\int_{\gamma_B \Pb^C(T+\e,\tOm_1^{+}) \cap \cO_\e}\psi_\e(b)
 d\rho(b) =\int_{ \cO_\e}\psi_\e(b)
 d\rho(b)=1 .$$

It follows that
\begin{align*}
\int_{B(T+\e, \tOm_1^+)} F^+(b) d\rho_{\ell}(b) & \geq
 \# \{ \g \in \G : \g_{K^g} \in \Om_2, \g_\Pb^{-1} \in B^C(T, \tOm_1)\} \\
&=N_T^C(\hat \Om_1, \hat \Om_2) .
\end{align*}
Similarly, we have
\begin{align*}&
\int_{\Pb_0^C(T-\e,\tOm_1^{-})} F^-(b_0) d\rho_{\ell}(b_0) \\
&=\int_{\Pb^C(T-\e,\tOm_1^{-})} F^-(g^{-1}bg) d\rho(b)
\\&\le
\int_{\Pb^C(T-\e,\tOm_1^{-})}
\underset{\g \in \G}{\sum} \chi_{\Om_2}(\g_{K^g})
\psi_\e (\g_\Pb b)d\rho(b)\\
&= \underset{\g \in \G, \; \g_{K^g} \in \Om_2}{\sum}\int_{\gamma_B \Pb^C(T-\e,\tOm_1^{-}) \cap \cO_\e}\psi_\e(b)
 d\rho(b).
\end{align*}
Since $\tOm_1^- V_\e\subset \tOm_1$, we have
\[\Pb^C(T-\e,\tOm_1^{-}) \cO_\e \subset \Pb(T,\tOm_1). \]
Therefore for $\g \in \G$ such that $\g_B^{-1}
 \notin \Pb (T,\tOm_1)$, we
have $\rho_{\ell}(\Pb^C(T-\e,\tOm_1^{-})
\cap \g_{\Pb}^{-1} \cO_\ve )=0.$

 Hence it follows
that
\[ \int_{\Pb^C(T-\e,\tOm_1^{-})} F(b)
 d\rho(b) \leq \# \{\g \in \G : \g_{K^g}\in \Om_2,
\g_\Pb^{-1} \in \Pb(T, \tOm_1)
 \}  = N_T(\hat \Om_1, \hat \Om_2) . \]

\end{proof}

\begin{lemma}\label{r} Let $k\in K$ and $k_2\in K^g$.
 Writing $k^{-1}k_2g=a_rnk_0\in ANK$,
we have $$r=\beta_{k\xi}(y, x) .$$
\end{lemma}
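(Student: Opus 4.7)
The plan is to read off both sides through the Busemann cocycle on $\bH^n$. The key general identity is: for any $h \in G$ with Iwasawa decomposition $h = a_s n k_0 \in ANK$, one has $\beta_\xi(h \cdot x, x) = s$. Indeed, $k_0$ fixes $x$; $n \in N$ lies in the stabilizer of $\xi$ and preserves the horocycle through $x$ based at $\xi$, so $\beta_\xi(n x, x) = 0$; and since $a_s$ fixes $\xi$, the cocycle identity combined with a direct computation from the definition (using the ray $t \mapsto a_{-t}x$ toward $\xi$) yields $\beta_\xi(a_s \cdot nx, nx) = \beta_\xi(a_s x, x) = s$. Additivity of the Busemann cocycle then assembles these into $\beta_\xi(h x, x) = s$.

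I would then apply this to $h := k^{-1} k_2 g$, which by hypothesis has Iwasawa decomposition $a_r n k_0$. Since $k_2 \in K^g = gKg^{-1}$ fixes $y = g(x)$, we have $h \cdot x = k^{-1} k_2 g \cdot x = k^{-1} k_2 \cdot y = k^{-1} \cdot y$, and therefore $r = \beta_\xi(k^{-1} y, x)$.

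The last step is a single application of the isometric invariance $\beta_{g\eta}(gp, gq) = \beta_\eta(p, q)$ of the Busemann cocycle, used with the isometry $k \in K$ and the fact that $kx = x$:
\[ \beta_\xi(k^{-1} y, x) = \beta_{k \xi}(y, k x) = \beta_{k \xi}(y, x). \]
Combined with the previous step, this is exactly the claimed identity $r = \beta_{k \xi}(y, x)$.

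The only subtlety is a sign-convention check: under the paper's setup $\xi = X_0^-$ and $\xi_x = X_0^+$, the flow $a_t$ with $t > 0$ moves $x$ toward $\xi_x$ and away from $\xi$, so one must verify that, with the paper's definition of $\beta$, the direct calculation gives $\beta_\xi(a_s x, x) = +s$ rather than $-s$. Beyond this routine bookkeeping, the proof is a direct dictionary between the $ANK$-decomposition of $G$ acting on $\bH^n$ and the Busemann cocycle, plus one equivariance step, and I do not foresee any other obstacle.
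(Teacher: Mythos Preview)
Your proof is correct and follows essentially the same route as the paper: both reduce, via $k_2\in K^g$ fixing $y$ and the $K$-equivariance $\beta_{k\xi}(y,x)=\beta_\xi(k^{-1}y,x)$, to the identity $\beta_\xi(a_r n k_0\, x,\,x)=r$. The only cosmetic difference is that the paper verifies this last identity by a direct limit computation along the ray $t\mapsto a_{-t}x$ (using $k_0x=x$ and $a_t n a_{-t}\to e$), whereas you phrase it via the cocycle property together with the facts that $N$ preserves the horosphere at $\xi$ through $x$ and that $\beta_\xi(a_s p,p)$ is independent of $p$ when $a_s$ fixes $\xi$; your sign check confirming $\beta_\xi(a_s x,x)=+s$ is exactly the paper's convention $\xi=\lim_{t\to\infty}a_{-t}x$.
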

\begin{proof}
Since $\xi=\lim_{t\to\infty}a_{-t}x$, we compute that
\begin{align*}&\beta_{k\xi}(y, x)=\beta_{k\xi}(k_2y, x)\\
&= \beta_{\xi}(k^{-1} k_2 y,x)\\ &=
\lim_{t\to \infty}d(a_rnk_0 x, a_{-t}x) -t\\
&=\lim_{t\to \infty}d(a_r(a_t na_{-t}) a_tk_0 x, x) -t
=r .
\end{align*}
\end{proof}

For simplicity, we set $F^{\pm}_\eta:= F_{\Om_2,\eta}^{\pm}$
and $f^{\pm}_\eta:= f_{\Om_2,\eta}^{\pm}$.

\begin{lemma} \label{aux} We have
$$ \limsup_\eta  m_\G^{\BR}(F^+_{\eta}) =\liminf_\eta  m_\G^{\BR}(F_{\eta}^{-})=
  \nu_y({\hat \Om_2}) .$$
\end{lemma}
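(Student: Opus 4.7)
The plan is to unfold $m^{\BR}_\G(F^\pm_\eta) = \int_G f^\pm_\eta \, d\tilde m^{\BR}$ using $\G$-invariance of the lifted BR measure, express the resulting integral in the coordinates $(k,b)\in K^g\times B$ coming from $h = h_{K^g} h_B g = kbg$, and extract the $\eta\to 0$ limit by disintegrating $\tilde m^{\BR}$ along the backward-endpoint foliation. Since $f^\pm_\eta(kbg) = \phi^\pm_{\Om_2,\e(\eta)}(k)\,\psi_{\e(\eta)}(b)$ and $dh = dk\,d\rho(b)$,
\[
m^{\BR}_\G(F^\pm_\eta) \;=\; \int_{K^g\times B}\phi^\pm_{\Om_2,\e(\eta)}(k)\,\psi_{\e(\eta)}(b)\,d\tilde m^{\BR}(kbg).
\]

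Next, I would rewrite the BR density with reference point $y = g(x)$ by applying the conformality relations $d\nu_y(\zeta) = e^{-\dG \be_\zeta(y,x)}d\nu_x(\zeta)$ and the analogue for $\{m_y\}$, giving
\[
d\tilde m^{\BR}(u) \;=\; e^{(n-1)\be_{u^+}(y,\pi u)+\dG\be_{u^-}(y,\pi u)}\,dm_y(u^+)\,d\nu_y(u^-)\,dt.
\]
The crucial geometric observation is that for $h = kbg$, the backward endpoint $u^- = h(\xi) = k(g\xi)$ depends only on $k$ (because $b\in B$ stabilizes $g\xi$), and the map $k\mapsto k(g\xi)$ identifies $K^g/M^g$ with $\pH$; thus the $d\nu_y(u^-)$ factor pulls back to $d\nu_y(k(g\xi))$ on $K^g$. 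Along the transverse $b$-direction, at $b=e$ we have $\pi u = y$, so the Busemann exponents vanish, and the residual measure $dm_y(u^+)\,dt$ on the leaf $\{u:u^-=k(g\xi)\}$ corresponds, via $b\mapsto kbg\cdot M$, to the left Haar $d\rho$ on $B$ up to a smooth Jacobian factor that, under the paper's normalizations of $m_y$ and $\rho$, equals $1$ at $b=e$.

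Combining these, since $\psi_{\e(\eta)}$ is an approximate identity at $e\in B$ with $\int_B\psi_{\e(\eta)}\,d\rho = 1$ and since $\e(\eta)\to 0$ as $\eta\to 0$, the $b$-integral selects the value at $b=e$ and yields
\[
\lim_{\eta\to 0}m^{\BR}_\G(F^\pm_\eta) \;=\; \lim_{\eta\to 0}\int_{K^g}\phi^\pm_{\Om_2,\e(\eta)}(k)\,d\nu_y(k(g\xi)).
\]
Finally, the sandwich $\phi^-_{\Om_2,\e}\le\chi_{\Om_2}\le\phi^+_{\Om_2,\e}$ together with the $\nu_y$-smallness estimate \eqref{ue} on the transition region forces both $\limsup_\eta m^{\BR}_\G(F^+_\eta)$ and $\liminf_\eta m^{\BR}_\G(F^-_\eta)$ to equal $\nu_y(\Om_2(g\xi)) = \nu_y(\hat\Om_2)$, proving the lemma.

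The main technical obstacle, in my view, is the disintegration step. Because $\nu_y$ can be singular on $\pH$, the BR density is not absolutely continuous with respect to Haar on $G$, so the transverse Jacobian at $b=e$ must be interpreted distributionally, and one must verify that it equals $1$ with the paper's explicit normalizations. A conceptually clean route is to exploit the $N^g$-invariance of $\tilde m^{\BR}$ (underlying the ergodicity of the horocycle flow) together with the conformality of $\{m_y\}$, which together pin down the conditional measure on each $u^-$-leaf as the pullback of the Haar measure on the stabilizer of that leaf, with the overall constant matched against the formulas already established in Proposition~\ref{prop:6.6}.
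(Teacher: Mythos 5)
Your outline is, in substance, the paper's own strategy: unfold $m^{\BR}_\G(F^\pm_\eta)$ to $\widetilde m^{\BR}(f^\pm_\eta)$, write $\widetilde m^{\BR}$ in coordinates adapted to $G=K^gBg$, use that $B$ fixes $g(\xi)$ so the backward endpoint of $kbg$ is $k(g\xi)$, pass from $\nu_x$ to $\nu_y$ by conformality, and finish with the $\phi^\pm$ sandwich. The problem is that you assert but do not prove the one step that carries all the content of Lemma \ref{aux}: that the conditional of $\widetilde m^{\BR}$ on a leaf $\{u^-=k(g\xi)\}$, in the coordinate $b\mapsto kbg$, is $d\rho$ up to a factor equal to $1$ at $b=e$ \emph{with the paper's normalizations}. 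That identification, constant included, is exactly what the paper's computation establishes: writing $b=ga_rng^{-1}$ one has $kbg=kga_rn$; if $kg=k_1a_{r_1}n_1$ is the Iwasawa decomposition with $k_1\in K$, then Lemma \ref{r} gives $r_1=\beta_{k_1\xi}(y,x)$, the conformal relation $e^{-\dG\beta_{k_1\xi}(y,x)}d\nu_x(k_1\xi)=d\nu_y(k_1\xi)$ together with \eqref{xi} converts the $K$-integral against $\nu_x$ into the $K^g$-integral against $\nu_y$, and the leafwise density comes out as $e^{-\dG r}\,dr\,dn$ against $d\rho_\ell=dr\,dn$, i.e.\ your ``Jacobian'' is $e^{-\dG r}=1+O(\e)$ on $\op{supp}\psi_\e$ (and one must also check that the $dn$ in the boundary-coordinate formula for $\widetilde m^{\BR}$ is the one pinned down by $dh=d\rho_\ell(b)\,dk$). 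So the claim you flag is true, but as written the proposal defers precisely the verification that constitutes the lemma --- recall the introduction's warning that this computation is delicate because $m^{\BR}_\G$ is not invariant --- and the suggested appeal to $N^g$-invariance and to Proposition \ref{prop:6.6} is too vague to fix the constant.

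Two smaller points to repair if you carry this out. First, $dh=dk\,d\rho(b)$ is not correct as stated for $h=kbg$ (a modular factor, i.e.\ the right Haar measure of $B$, appears there); this is harmless since your argument only uses $\widetilde m^{\BR}$ and the discrepancy is $1+O(\e)$ on $\op{supp}\psi_\e$, but it should not be quoted as an identity. Second, $f^\pm_\eta$ is not right-$M$-invariant: right multiplication by $m\in M$ sends $(h_{K^g},h_B)$ to $(h_{K^g}\,gmg^{-1},\,(gmg^{-1})^{-1}h_B\,gmg^{-1})$, whereas $m^{\BR}_\G$ lives on $\G\ba G/M$ and the paper's boundary-coordinate formula for $\widetilde m^{\BR}$ is stated for $M$-invariant functions. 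This is exactly why the paper computes $m^{\BR}_\G(F^\pm_\eta*\chi_{V_\e})/\op{Vol}(V_\e)$ and recovers $m^{\BR}_\G(F^\pm_\eta)$ at the end by continuity; a direct disintegration argument must either average over $M$ (using that $M^g$ normalizes $B$, preserves $\rho$, and only perturbs $\psi_\e$ by conjugation) or smooth as the paper does.
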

\begin{proof}
We use the formula for $\tilde m^{\BR}$: for any $\Psi\in C_c(G)^M$,
$$\widetilde{m}^{\mathrm{BR}}(\Psi)
 =\int_{KAN}  \Psi(k a_r n )
 e^{-\de r}  dn dr d\nu_x(k(\xi)). $$

Define functions ${\mathfrak R}_\e, {\mathfrak R}_\e^+,
{\mathfrak R}_\e^- $ on $G$: for $h=a_rnk\in ANK$,
$${\mathfrak R}_\e (h)=e^{-\delta_\G r}\chi_{V_\e}(k) ,\;
{\mathfrak R}_\e^+ (h)=e^{-\delta_\G r}\chi_{V_\e^+}(k) ,\;
{\mathfrak R}_\e (h)=e^{-\delta_\G r}\chi_{ V_\e ^-}(k) .$$
 Note that
$$\int_{B}\psi_\e(b^{-1}) d\rho(b)
=\int_{AN}\psi_\e( ga_tng^{-1} )e^{-(n-1)t} dtdn$$
and hence
$$ e^{-(n-1)\e}\le  \int_{B}\psi_\e(b^{-1})d\rho(b)\le  e^{(n-1)\e} .$$

We then have
\begin{align*}
& m_\G^{\BR}(F^{+}_{\eta}
 *\chi_{V_\e}) =\widetilde{m}^{\mathrm{BR}}(f^{+}
_{\eta}*\chi_{V_\e})
\\ & =\int_{KAN}
 \int_{k_1\in V_{\e}} f^{+}_{\eta} (k (a_r n k_1))
\chi_{V_\e}(k_1) e^{-\de r} dk_1 dn dr d\nu_x(k(\xi))\\
 &=  \int_{k\in K}\int_{h\in G}
f^{+}_{\eta}(k h ) {\mathfrak R}_\e (h) dh  d\nu_x(k(\xi)) \\
 &=  \int_{k\in K}\int_{h\in G}
f^{+}_{\eta}(h ) {\mathfrak R}_\e (k^{-1} h) dh  d\nu_x(k(\xi)) \\
&= \int_{k\in K}\int_{k_2\in K^g}\int_{b\in B}
f^{\pm}_{\eta}(k_2 b^{-1}g ) {\mathfrak R}_\e (k^{-1} k_2 b^{-1}g)
 d\rho(b)dk_2 d\nu_x(k(\xi))  \\
&= \int_{k\in K}\int_{k_2\in K^g}\int_{b\in B}
\phi_{\Om_2,\e(\eta)}^{+}(k_2)\psi_{\e(\eta)}( b^{-1} ) {\mathfrak R}_\e (k^{-1} k_2 b^{-1}g)
 d\rho(b)dk_2 d\nu_x(k(\xi))  \\
&=(1+O(\eta))\int_{k_2\in K^g}\int_{k\in K}
\phi^{+}_{\Om_2, \e(\eta)}(k_2 ) {\mathfrak R}_\e^+ (k^{-1}k_2g)
 dk_2 d\nu_x(k(\xi))
.\end{align*}

For $h\in G$,
define $\hat k_{h}\in K$ to be the unique element
such that
$$h\in B_0 \hat k_{h}.$$
We note that
$$\hat k_{k^{-1}k_2g} =\hat k_{k^{-1}g}(g^{-1}k_2g) .$$

Hence together with Lemma \ref{r},
$${\mathfrak R}_\e^{\pm} (k^{-1}k_2 g)=\chi_{V_\e^{\pm}} (\hat k_{k^{-1}g}(g^{-1}k_2g))
\cdot \e^{-\delta_\G \beta_{k\xi}(y,x)} .$$

Define functions $\tilde \phi_{\Omega_2,\e}^{\pm}\in C(K^g)^{M^g}$ by
$$\tilde \phi_{\Omega_2,\e}^{+}(k_2) :=\sup_{k\in U_\e^+}\phi_{\Omega_2,\e}^+(k_2k)\quad\text{and} \quad \tilde \phi_{\Omega_2,\e}^{-}(k_2) :
=\inf_{k\in  U_\e^-}\phi_{\Omega_2,\e}^-(k_2k).$$
Note that $0\le \tilde \phi^+_{\Om_2, \e}\le 1$ vanishes outside $\Omega_2 U_\e^+$
and is $1$ on $\Omega_2$.

Therefore, using the conformal property of $\{\nu_x:x\in \bH^n\}$:
$$e^{-\delta_\G\beta_{k\xi}(y,x)}d\nu_{x}(k\xi)=
d\nu_y(k\xi),$$
we have
\begin{align*}
& \int_{k_2\in K^g}\int_{k\in K}
\phi^{+}_{\Om_2, \e (\eta)}(k_2 ) \chi_{V_\e^+} (\hat k_{k^{-1}g}(g^{-1}k_2g))
\cdot e^{-\delta_\G \beta_{k\xi}(y,x)}
dk_2 d\nu_x(k(\xi))\\&=
\int_{k_2\in K^g}\int_{k\in K}
\phi^{+}_{\Om_2, \e (\eta)}( g\hat k_{k^{-1}g} ^{-1}g^{-1} k_2 ) \chi_{V_\e^+}
(g^{-1}k_2g)
dk_2 d\nu_y(k(\xi))\\ &\le
  \int_{k_2\in K^g}\int_{k\in K}
\tilde \phi^{+}_{\Om_2, \e (\eta)}(g\hat k_{k^{-1}g} ^{-1} g^{-1} )
 \chi_{V_\e^+} (g^{-1}k_2g)
dk_2 d\nu_y(k(\xi))\\
&= (1+O(\eta))\op{Vol}(V_\e) \int_{k\in K}
\tilde \phi^{+}_{\Om_2, \e (\eta)}(g\hat k_{k^{-1}g} ^{-1} g^{-1} ) d\nu_y(k(\xi)).
\end{align*}

Since $ kB_0= (g\hat k_{k^{-1}g} ^{-1} g^{-1})(gB_0)$
and $B_0$ stabilizes $\xi$,
we have \begin{equation}\label{xi}
k(\xi)=(g\hat k_{k^{-1}g} ^{-1} g^{-1})(g\xi).\end{equation}
Therefore we have
\begin{align*} &m^{\BR}_\G(F^{+}_{\eta}
 *\chi_{V_\e})
\\ &=
 (1+O(\eta)) \op{Vol}(V_\e) \int_{k'\in K^g}
\tilde \phi^{+}_{\Om_2, \e (\eta)}(k' ) d\nu_y(k'(g(\xi)))
\\ &= (1+O(\eta))
 \op{Vol}(V_\e)  \nu_y(\Om_2(g(\xi)))
\quad\text{ by \eqref{ue}}.
\end{align*}

Hence we conclude
$$\limsup_\e \frac{m_\G^{\BR}(F^+_{\eta} *\chi_{V_\e})}
{\op{Vol}(V_\e)} =(1+O(\eta)) \nu_y(\Om_2(g(\xi))) .$$

Similarly we can deduce
$$\liminf_{\e} \frac{m_\G^{\BR}(F^-_{\eta} *\chi_{V_\e})}{\op{Vol}(V_\e)}
=(1+O(\eta))  \nu_y(\Om_2(g(\xi))).$$

On the other hand, it is not hard to deduce from the continuity of $F^{\pm}_\eta$
that $$m^{\BR}_\G (F^{\pm}_\eta)=\lim_\e  \frac{m^{\BR}_\G(F^\pm_{\eta} *\chi_{V_\e})}
{\op{Vol}(V_\e)}.$$

Hence
$\limsup_\eta m_\G^{\BR}(F^+_\eta)=\nu_y(\Om_2(g(\xi)))=\liminf_\eta m_\G^{\BR}(F^-_\eta).$
 \end{proof}

\noindent{\bf Proof of Theorem
 \ref{thm:main}.}

  Since $\nu_x(\partial(\Om_1))=0$
and any circle with center in $\Lambda(\G)$ has measure zero by Lemma \ref{rud},
we may choose $V_\e$ so that
$\nu_x(\partial(\Om_{1,\e}^+(\xi_x)))=
\nu_x(\partial(\Om_{1,\e}^-(\xi_x)))=0$.

By Lemma \ref{lem:thicken1}, Theorem \ref{thm:6.1} and Lemma \ref{aux}, we have
\begin{align*}&\limsup_{T} \frac{N_T^C(\hat\Om_1,\hat\Om_2)}{e^{\delta_\G T}}\le \limsup_{T,\eta}
\frac{1}{e^{\delta_\G T}} \int_{\Pb_0(T+\e,\Om_{1,\e}^+)} F^{+}_{\eta}(b_0)
d\rho_\ell (\q_0) \\
&= \limsup_{\eta} \frac{(1+O(\eta))
\nu_x (\Om_1 (\xi_x))}{\delta_\G \cdot |\bms |}\cdot
\limsup_\eta
 m_\G^{\BR}(F^+_{\eta} )
\\
&=\frac{\nu_x  (\hat \Om_1 ) \nu_y(\hat \Om_2)}{\delta_\G \cdot |\bms |}.
  \end{align*}

Similarly,
\begin{align*}&\liminf_{T,} \frac{N_T(\hat\Om_1,\hat\Om_2)}{e^{\delta_\G T}}\ge \liminf_{T,\eta}
\frac{1}{e^{\delta_\G T}} \int_{\Pb_0(T-\e ,\Om_{1,\e}^-)} F^{-}_{\eta}(b_0)d\rho_{\ell}(\q_0) \\
&= \liminf_{\eta}
\frac{(1+O(\eta))\nu_x (\Om_1 (\xi_x))}{\delta_\G \cdot |\bms |}\cdot
\liminf_\eta  m_\G^{\BR}(F^-_{\eta} )
\\
&=\frac{\nu_x (\hat \Om_1 ) \nu_y(\hat \Om_2) }{\delta_\G \cdot |\bms |}
  .\end{align*}

Since $|N_T-N_T(C)|\le \# \G\cap K\{a_t:0\le t\le C\} K$
is a finite number independent of $T$,
the above proves that
 $$
N_T(\hat \Omega_1, \hat \Omega_2)\sim
{e^{\de T}} \cdot \frac{\nu_x(\hat \Om_1 ) \nu_y(\hat \Om_2) }{\de_\G \cdot |\bms|}
 .$$

\qed



\newcommand{\goth}[1]{\EuFrak{#1}}
\renewcommand{\C}{{\bf C}}
\renewcommand{\R}{{\bf R}}
\renewcommand{\Q}{{\bf Q}}
\newcommand{\Z}{{\bf Z}}
\newcommand{\Bsl}{{\bf SL}}
\newcommand{\NN}{{\bf N}}

\newtheorem{thm}{Theorem}
\newtheorem{conj}{Conjecture}
\newtheorem{lem}{Lemma}[section]
\newtheorem{prop}{Proposition} 
\newtheorem{cor}{Corollary}[section]
\newtheorem{df}{Definition}[section]



\newpage
\renewcommand{\thesection}{A}

\section{Appendix by Fran\c cois Maucourant}

\label{app}


\begin{abstract}
 The purpose of this note is to show how one can recover a result in the spirit of Lim and Oh from a Theorem of Roblin. The following is part of the author's PhD Thesis \cite{Mau}, with some minor modifications, and some of these ideas have also been used in \cite{GorodnikMaucourant}, but in the case of lattices in higher rank Lie groups.
\end{abstract}
\maketitle


 Let $(X,d)$ be a CAT(-1) space, and $\Gamma$ a discrete, non-elementary subgroup of isometries of $X$. Denote by $\partial X$ the visual boundary of $X$, $\overline{X}=X\cup \partial X$, $\delta$ the critical exponent of $\Gamma$, which is assumed finite, $\{\nu_x\}$ the Patterson-Sullivan density for $\Gamma$, and $m_\Gamma^{\BMS}$ the associated
  Bowen-Margulis-Sullivan measure. We shall assume that the length spectrum is non-arithmetic, and that $m_\Gamma^{\BMS}$ is of finite mass; remark that all these hypotheses are satisfied in the case of geometrically finite groups on hyperbolic spaces. First, let us state Roblin's Theorem.
\begin{thm} \cite[Theorem 4.1.1]{Roblin2003}  Let $f$ a continuous function from $\overline{X}^2$ to $\R$, and $(x,y)\in X^2$. Then
$$\lim_{T\rightarrow +\infty} \frac{\delta ||m_\Gamma^{\BMS}||}{e^{\delta T}} \sum_{\gamma \in \Gamma, d(x,\gamma y)\leq T} f(\gamma y,\gamma^{-1} x)=\int_{\partial X^2} f(\xi,\eta) d\nu_x(\xi)d\nu_y(\eta).$$
\end{thm}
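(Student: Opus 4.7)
The plan is to combine two ingredients: mixing of the geodesic flow on $T^1(\Gamma\backslash X)$ with respect to $m^{\mathrm{BMS}}_\Gamma$, which holds under the non-arithmetic length spectrum assumption (by Babillot's theorem extended to the CAT$(-1)$ setting by Roblin), together with the Hopf parametrization $T^1 X \cong (\partial X\times\partial X\setminus\Delta)\times \mathbb{R}$ and the conformal transformation law of the Patterson--Sullivan density. By Stone--Weierstrass and the inner regularity of $\nu_x\otimes\nu_y$, one may reduce to the case $f(\xi,\eta)=\phi(\xi)\psi(\eta)$ with $\phi,\psi$ continuous on $\overline X$.

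Fix a small $\rho>0$ and non-negative bump functions $\widetilde\Phi\in C_c(B(x,\rho))$, $\widetilde\Psi\in C_c(B(y,\rho))$, and build test functions on $T^1 X$ by
\[
\widetilde F_1(u)=\widetilde\Phi(\pi(u))\,\phi(u^+),\qquad \widetilde F_2(u)=\widetilde\Psi(\pi(u))\,\psi(u^-),
\]
with $\phi,\psi$ evaluated on the boundary endpoints of $u$ via continuous extension to $\overline X$; push them $\Gamma$-equivariantly down to $F_1,F_2\in C_c(\Gamma\backslash T^1 X)$. Since the geodesic flow commutes with $\Gamma$, the integral $\int_{\Gamma\backslash T^1 X}F_1\cdot(F_2\circ g_T)\,dm^{\mathrm{BMS}}_\Gamma$ unfolds to
\[
\sum_{\gamma\in\Gamma}\int_{T^1 X}\widetilde F_1(u)\,\widetilde F_2(\gamma g_T u)\,dm^{\mathrm{BMS}}(u).
\]
The $\widetilde\Phi,\widetilde\Psi$ supports force the summand to vanish unless $d(x,\gamma^{-1}y)\approx T$; for each such $\gamma$, the geodesic carrying $u$ from near $x$ to $\gamma^{-1} y$ has $u^+$ close to $\gamma^{-1}y$ and $\gamma u^-$ close to $\gamma x$ on the boundary. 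A direct Hopf-coordinate computation then shows that each such $\gamma$ contributes $e^{-\delta T}\phi(\gamma^{-1}y)\psi(\gamma x)(1+o_\rho(1))$ times the product $\int\widetilde\Phi\,d\mathrm{vol}\cdot\int\widetilde\Psi\,d\mathrm{vol}$, the exponential factor coming from the $e^{\delta\beta}$-weight in the BMS density.

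Now invoke mixing: as $T\to\infty$, the integral converges to $\|m^{\mathrm{BMS}}_\Gamma\|^{-1}\,m^{\mathrm{BMS}}_\Gamma(F_1)\,m^{\mathrm{BMS}}_\Gamma(F_2)$. Expanding each BMS mass in Hopf coordinates, the base integrations against the $e^{\delta\beta}$-weights convert $\widetilde\Phi\otimes\phi$ into $\int\widetilde\Phi\,d\mathrm{vol}\cdot\int\phi\,d\nu_x$ and analogously for $\widetilde\Psi,\psi,\nu_y$. Equating the two expressions, cancelling the common volume factors of the bumps, and re-indexing $\gamma\leftrightarrow\gamma^{-1}$ yields the $T$-shell asymptotic for $\sum_\gamma\phi(\gamma y)\psi(\gamma^{-1}x)$. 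The ball-count $d(x,\gamma y)\leq T$ then follows by an Abel-type summation of shells, the resulting geometric series in $e^{-\delta}$ producing the $\delta^{-1}$ factor in the leading constant.

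The main obstacle is the mixing input itself, which under non-arithmeticity of the length spectrum is a deep result. A secondary technical issue is the uniformity of the thickening: one needs a Sullivan Shadow Lemma to ensure that the error $o_\rho(1)$ is uniform in $\gamma$, and to confirm that the constant extracted from the Hopf-coordinate computation matches $(\delta\|m^{\mathrm{BMS}}_\Gamma\|)^{-1}$ in the limit $\rho\to 0$. The passage from functions on $X$ to $\overline X$ (handling the apparent mismatch that $\gamma y,\gamma^{-1}x\in X$ while the limit measure sits on $\partial X^2$) is then handled by the continuity of $f$ on $\overline X^2$, since for $\gamma\to\infty$ the points $\gamma y,\gamma^{-1}x$ accumulate on the boundary.
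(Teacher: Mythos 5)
This statement is not proved in the paper at all: it is quoted verbatim from Roblin's memoir (Th\'eor\`eme~4.1.1 of \cite{Roblin2003}) and used as a black box in Maucourant's appendix, so there is no internal proof to compare your attempt against. Your sketch does follow the strategy by which such results are genuinely established --- Margulis-type thickening run with the Bowen--Margulis--Sullivan measure, with mixing of the geodesic flow under the non-arithmetic length spectrum hypothesis as the dynamical input --- so the architecture is the right one.

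There is, however, a concrete gap at the quantitative heart of the argument. You assert that each $\gamma$ in the shell contributes $e^{-\delta T}\phi(\gamma^{-1}y)\psi(\gamma x)(1+o_\rho(1))$ times $\int\widetilde\Phi\,d\mathrm{vol}\cdot\int\widetilde\Psi\,d\mathrm{vol}$, and later that $m_\Gamma^{\BMS}(F_1)$ factors as $\int\widetilde\Phi\,d\mathrm{vol}\cdot\int\phi\,d\nu_x$. Neither is correct: in Hopf coordinates $dm^{\BMS}=e^{\delta\beta_{u^+}}e^{\delta\beta_{u^-}}\,d\nu(u^+)\,d\nu(u^-)\,dt$, so the mass of a flow box over $B(x,\rho)$ is a product of Patterson--Sullivan measures of shadows (times a time-length), not the Riemannian volume of the base ball; when $\delta<n-1$ these measure classes are mutually singular, so no volume factor can appear or cancel. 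Worse, the individual correlation term $\int\widetilde F_1\cdot(\widetilde F_2\circ\gamma g_T)\,dm^{\BMS}$ is controlled by $\nu_x$-measures of small shadows that fluctuate with $\gamma$ (the PS measure is in general not doubling and concentrates near parabolic fixed points), and the Shadow Lemma gives only two-sided bounds with unspecified constants, so a term-by-term asymptotic carrying the exact constant $(\delta\,|m_\Gamma^{\BMS}|)^{-1}$ cannot be extracted this way. The workable version of your plan evaluates only the \emph{total} correlation integral by mixing and identifies the counting function with that integral up to $1+O(\rho)$ by arranging that exactly the same flow-box BMS masses occur on both sides and cancel; ``cancelling the common volume factors'' must be replaced by this cancellation of BMS box masses, and the geometric series in $e^{-\delta}$ in your Abel summation must be replaced by a limit of vanishing shell width, which is what produces $\delta^{-1}$ rather than $(1-e^{-\delta})^{-1}$.
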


 We shall prove here:

\begin{thm}\label{mmmm} Let $f$ be a continuous function on $\overline{X}^2$, and $\zeta \in \partial X$.
$$\lim_{T\rightarrow +\infty} \frac{\delta ||m_\Gamma^{\BMS}||}{e^{\delta T}} \sum_{\gamma \in \Gamma, d(x,\gamma y)\leq T} f(\gamma \zeta,\gamma^{-1} x)=\int_{\partial X^2} f(\xi,\eta) d\nu_x(\xi)d\nu_y(\eta).$$
\end{thm}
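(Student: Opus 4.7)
The plan is to deduce Theorem~\ref{mmmm} from Roblin's theorem by showing that replacing the point $\gamma y$ with the boundary point $\gamma\zeta$ in the first argument of $f$ produces an asymptotically negligible change. That is, it suffices to prove
\begin{equation*}
\lim_{T\to\infty} \frac{\delta\,\|m_\Gamma^{\BMS}\|}{e^{\delta T}} \sum_{\gamma\in\Gamma,\,d(x,\gamma y)\le T} \bigl[\,f(\gamma y,\gamma^{-1}x) - f(\gamma\zeta,\gamma^{-1}x)\,\bigr] = 0.
\end{equation*}
The geometric input is the convergence-group dynamics of $\Gamma$ on the CAT($-1$) compactification $\overline X$: if $\gamma_n\in\Gamma$ satisfies $d(x,\gamma_n y)\to\infty$ and $\gamma_n^{-1}x$ tends, along a subsequence, to some $\eta\in\partial X$, then $\gamma_n$ converges to a constant map with value the attracting fixed point $\xi\in\partial X$, uniformly on compact subsets of $\overline X\setminus\{\eta\}$. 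Consequently, whenever $\gamma^{-1}x$ lies in a fixed compact subset of $\overline X$ disjoint from $\zeta$, the distance $d_{\overline X}(\gamma y,\gamma\zeta)$ tends to $0$ uniformly over $\gamma$ with $d(x,\gamma y)\to\infty$.

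Given $\epsilon>0$, pick a neighborhood $U_\epsilon$ of $\zeta$ in $\overline X$ with $\nu_y(\overline{U_\epsilon}\cap\partial X)<\epsilon$; this is possible because $\nu_y$ is atomless under the standing hypotheses, a classical consequence of the ergodicity of the geodesic flow with respect to $m_\Gamma^{\BMS}$. Let $\varphi_\epsilon\in C(\overline X)$ be a continuous bump with $\chi_{U_\epsilon}\le\varphi_\epsilon\le 1$ and supported in a slightly larger neighborhood of $\zeta$. Splitting the sum over $\gamma$ according to whether $\gamma^{-1}x\in U_\epsilon$ or not, I would apply Roblin's theorem to $(\xi,\eta)\mapsto 2\|f\|_\infty\,\varphi_\epsilon(\eta)$ to bound the contribution from the first part asymptotically by $2\|f\|_\infty\,\nu_x(\partial X)\int\varphi_\epsilon\,d\nu_y=O(\epsilon)$. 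For the second part (away from $\zeta$), the uniform convergence of $d_{\overline X}(\gamma y,\gamma\zeta)$ together with uniform continuity of $f$ on the compact space $\overline X^2$ bounds the integrand by some $\omega_\epsilon(T)\to 0$, and the orbital count $\#\{\gamma:d(x,\gamma y)\le T\}=O(e^{\delta T})$ (which follows from Roblin's theorem with $f\equiv 1$) gives an overall contribution of $O(\omega_\epsilon(T))\to 0$. Letting $T\to\infty$ first and then $\epsilon\to 0$ yields the desired vanishing.

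The main technical point is to justify the uniform convergence $d_{\overline X}(\gamma y,\gamma\zeta)\to 0$ in the CAT($-1$) setting, uniformly over $\gamma$ with $\gamma^{-1}x$ in a compact set avoiding $\zeta$. By $\gamma$-equivariance, this amounts to controlling the distance from $x$ to the ray $[\gamma y,\gamma\zeta)=\gamma\cdot[y,\zeta)$ in terms of the distance from $\gamma^{-1}x$ to $[y,\zeta)$: once $\gamma^{-1}x$ is bounded away from $\zeta$ in $\overline X$, its closest-point projection to $[y,\zeta)$ stays in a bounded set by exponential divergence of geodesics, so after translating by $\gamma$ the point $x$ projects close to $\gamma y$ on the ray toward $\gamma\zeta$ when $d(x,\gamma y)$ is large, forcing the visual distance from $x$ between $\gamma y$ and $\gamma\zeta$ to decay. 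Once this uniform geometric estimate is in hand, the above splitting argument delivers Theorem~\ref{mmmm}.
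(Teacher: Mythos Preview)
Your approach is correct and genuinely different from Maucourant's in the appendix. He never compares $\gamma\zeta$ with the interior point $\gamma y$; instead he compares $\gamma\zeta$ with $\gamma\zeta'$ for \emph{another} boundary point $\zeta'$, and the heart of his argument is a separate ``linear recurrence'' proposition: for any compact $K\subset \mathcal G=(\partial X)^2\setminus\mathrm{diag}$, one has $\#\{\gamma: d(\gamma x,y)\le T,\ (\gamma\zeta,\gamma\zeta')\in K\}=O_K(T)$, uniformly in $(\zeta,\zeta')$. This gives $|F(\zeta,x,T)-F(\zeta',x,T)|\to 0$ uniformly in $\zeta'\ne\zeta$; he then averages over $\zeta'$ against $\nu_y/\|\nu_y\|$, rewrites the resulting expression via the auxiliary function $g(z,x)=\|\nu_y\|^{-1}\int f(\zeta,x)\,d\nu_z(\zeta)$ (extended continuously to $\overline X^2$ by Tietze), and applies Roblin's theorem to $g$. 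Your route is more direct: you reduce immediately to Roblin's theorem for $f$ itself by showing $d_{\overline X}(\gamma y,\gamma\zeta)\to 0$ whenever $\gamma^{-1}x$ stays in a compact set avoiding $\zeta$, which is exactly the north--south (convergence group) dynamics of $\Gamma$ on $\overline X$; the remaining $\gamma$'s are handled by the atomlessness of $\nu_y$ at $\zeta$. What Maucourant's approach buys is the recurrence proposition itself, stated as being of independent interest, and it avoids invoking atomlessness (which, however, holds under the standing finite-BMS hypothesis). What your approach buys is that it bypasses both that proposition and the Tietze-extension trick, at the cost of a small extra geometric estimate. Two minor points of presentation: your ``$\omega_\epsilon(T)\to 0$'' is slightly imprecise since the terms with $d(x,\gamma y)$ small are not controlled this way---just split off the finitely many $\gamma$ in a fixed ball and let the threshold go to infinity afterwards; and your last paragraph is most cleanly phrased via Gromov products, using $(\gamma y\mid\gamma\zeta)_x=(y\mid\zeta)_{\gamma^{-1}x}\asymp d(\gamma^{-1}x,[y,\zeta))$, which tends to infinity uniformly for $\gamma^{-1}x$ in a compact set of $\overline X\setminus\{\zeta\}$ as $d(\gamma^{-1}x,y)\to\infty$.
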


 A simplified version (where $f$ does not depend on the second coordinate) appeared in \cite{Mau}. The argument divides into three steps: first, we show a quantitative estimate for the recurrence of the action of $\Gamma$ on the set of geodesics, of independent interest. Second, we show that the quantity on the left-hand side above does not depend too much on $\zeta$. Third, we integrate over $\zeta$ to be able to apply Roblin's Theorem.

\bigskip

\noindent{\bf 1: At most linear recurrence on the set of geodesics}
\bigskip

 Define $\mathcal{G}=(\partial X)^2-diag$ to be the set of bi-infinite oriented geodesics on $X$, and let $SX$ be the set of isometric embedding of $\R$ to $X$. The geodesic flow
 $(g^t)_{t \in \R}$ is the time-shift $g^tf(s)=f(s+t)$, and the canonical projection $\pi : SX \rightarrow X$ is the map $\pi(f)=f(0)$.
 We shall make the usual identification
 $$SX=\mathcal{G} \times \R,$$
and this can be done in such a way that $\pi((\xi,\eta),0))$ is the point of the geodesic from $\xi$ to $\eta$ closest to a fixed reference point $o \in X$. In such coordinates, the geodesic flow is just
$g^t((\xi,\eta),s)=((\xi,\eta),s+t)$, whereas the action of $\Gamma$ on $SX$ defines a cocyle $c: \Gamma \times \mathcal{G}\rightarrow \R$, such that for any $((\xi,\eta),t) \in SX=\mathcal{G} \times \R$, we have
 $$\gamma((\xi,\eta),t)=((\gamma \xi,\gamma \eta), t+c(\gamma,(\xi,\eta)) ).$$

Note that $|c(\gamma,(\xi,\eta))|$ is the distance between the projections of $o$ and $\gamma^{-1}o$ on the geodesic from $\xi$ to $\eta$, and recall (see \cite[Corollary 5.6]{Bal}) that in CAT(0) spaces, projection on a closed convex set is uniquely defined and $1$-Lipschitz, so the following inequality holds for any $\gamma \in \Gamma$, $(\xi,\eta) \in \mathcal{G}$:
$$|c(\gamma,(\xi,\eta))|\leq d(o,\gamma o).$$

\begin{prop}
Let $K$ be a compact subset of $\mathcal{G}$, and $(x,y) \in X^2$. Then there exists $C_{K}>0$ and $T_{x,y}>0$ such that for any $(\xi,\eta) \in \mathcal{G}$, and any $T\geq T_{x,y}$,
$$|\{ \gamma \in \Gamma \, : \, d(\gamma x,y) \leq T, (\gamma \xi, \gamma \eta) \in K \}| \leq C_{K} T.$$
\end{prop}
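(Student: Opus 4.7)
The plan is to translate the condition $(\gamma\xi,\gamma\eta)\in K$ into the geometric statement that $\gamma^{-1}o$ lies in a fixed tubular neighborhood of $\op{geod}(\xi,\eta)$, combine this with the bound $d(\gamma x,y)\le T$ to trap $\gamma^{-1}o$ in a tube of bounded radius intersected with a ball of radius $O(T)$, and then count using proper discontinuity together with the essential one-dimensionality of the geodesic.

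\emph{Geometric translation of the two conditions.} Since $K\subset\mathcal{G}$ is compact and the identification $SX=\mathcal{G}\times\R$ was chosen so that $\pi((\xi,\eta),0)$ is the foot of the perpendicular from $o$ onto $\op{geod}(\xi,\eta)$, the image $\pi(K\times\{0\})$ is a compact subset of $X$, contained in some ball $B(o,R)$. For any $\gamma\in\Gamma$ with $(\gamma\xi,\gamma\eta)\in K$, the foot of the perpendicular from $o$ onto $\gamma\cdot\op{geod}(\xi,\eta)$ is then within $R$ of $o$; applying the isometry $\gamma^{-1}$ yields $d(\gamma^{-1}o,\op{geod}(\xi,\eta))\le R$. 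Combined with the triangle-inequality estimate $d(\gamma^{-1}o,x)\le d(\gamma x,y)+d(o,y)\le T+d(o,y)$, we see that every counted $\gamma^{-1}o$ lies in $\mathcal{T}_R\cap B(x,T+d(o,y))$, writing $L:=\op{geod}(\xi,\eta)$ and $\mathcal{T}_R$ for its closed $R$-neighborhood.

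\emph{Counting via projection and properness.} The nearest-point projection $p\colon X\to L$ is well-defined and $1$-Lipschitz by Corollary~5.6 of \cite{Bal}, so all the projections $p(\gamma^{-1}o)$ lie in an arc of $L$ of length at most $2T+2d(o,y)$. Cover this arc by $N\le 2T+2d(o,y)+2$ subarcs of unit length with midpoints $q_1,\dots,q_N$; if $p(\gamma^{-1}o)$ falls in the $j$-th subarc then $\gamma^{-1}o\in B(q_j,R+\tfrac12)$. By discreteness of $\Gamma$ the set $F:=\{\beta\in\Gamma:d(o,\beta o)\le 2R+1\}$ is finite, and the standard pigeonhole observation (if $\gamma_1^{-1}o,\gamma_2^{-1}o\in B(q_j,R+\tfrac12)$ then $\gamma_2\gamma_1^{-1}\in F$) shows that at most $|F|$ group elements contribute per subarc. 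Summing,
$$\bigl|\{\gamma\in\Gamma:d(\gamma x,y)\le T,\,(\gamma\xi,\gamma\eta)\in K\}\bigr|\le (2T+2d(o,y)+2)|F|,$$
which is at most $C_K T$ with $C_K:=3|F|$ as soon as $T\ge T_{x,y}:=2d(o,y)+2$. Crucially, the bound is uniform in $(\xi,\eta)\in\mathcal{G}$, as required.

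\emph{Anticipated obstacle.} The only step that merits care is the first one: converting compactness of $K$ in the topology on $\mathcal{G}=(\partial X)^2-\mathrm{diag}$ into uniform control on the distance from $o$ to the geodesics parametrized by $K$. This hinges on the identification $SX\simeq\mathcal{G}\times\R$ being a homeomorphism in which the $\R$-coordinate is signed arclength from the foot of the perpendicular from $o$; this is standard in the CAT(-1) setting but worth recording explicitly. Everything else is routine CAT(0) geometry (Lipschitz projection, tube counting) together with proper discontinuity of the $\Gamma$-action.
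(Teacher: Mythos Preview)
Your argument is correct and shares the paper's core idea: compactness of $K$ forces $\gamma^{-1}o$ into a tube of fixed radius about $\op{geod}(\xi,\eta)$, the $1$-Lipschitz projection (equivalently, the cocycle bound $|c(\gamma,(\xi,\eta))|\le d(o,\gamma o)$) confines the relevant parameter along the geodesic to an interval of length $O(T)$, and proper discontinuity bounds the multiplicity. The paper's implementation differs only in packaging: instead of covering an arc by unit pieces and pigeonholing into cosets of the finite set $F$, it observes that the function $f(\Gamma v)=\sum_{\gamma\in\Gamma}1_{K\times[0,1]}(\gamma v)$ on $\Gamma\backslash SX$ is uniformly bounded by properness, and then bounds the count by $\int_{-T'}^{T'} f(g^t\Gamma v)\,dt\le 2T'\|f\|_\infty$. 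This dynamical formulation is a little slicker and sidesteps the explicit covering, but your hands-on tube count is equally valid and arguably more transparent; both approaches yield a constant $C_K$ depending only on $K$ (via $R$, resp.\ via $\|f\|_\infty$) and a threshold $T_{x,y}$ absorbing the basepoint shifts.
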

\begin{proof}
 For $v \in \Gamma\backslash SX$, define
 $$f(\Gamma v)=\sum_{\gamma \in \Gamma} 1_{K\times [0,1]}(\gamma v).$$
 Since $\Gamma$ is discrete and acts properly on $SX$, and $K\times [0,1]$ is a compact subset of $SX$, it follows that $f$ is uniformly bounded by some constant $C_0$ depending only on $K$.
 Choose $T_{x,y}=d(o,x)+d(o,y)+1$, $T\geq T_{x,y}$ and let $\gamma \in \Gamma$ such that $(\gamma \xi,\gamma \eta) \in K$, and $d(y, \gamma x)\leq T$.
 Define $v=((\xi,\eta),0)$, then $g^{-c(\gamma,(\xi,\eta))}(\gamma v) \in K\times \{0\}$. Thus,

 $$\int_{-c(\gamma,(\xi,\eta))}^{-c(\gamma,(\xi,\eta))+1} 1_{K\times [0,1]}(\gamma g^t v)dt=1,$$
 and so, since $|c(\gamma,(\xi,\eta))|\leq d(o,\gamma o) \leq d(x, \gamma y) + d(o,x)+d(o,y)\leq T+T_{x,y}-1$,
 $$1\leq \int_{-T-T_{x,y}}^{T+T_{x,y}} 1_{K\times [0,1]}(\gamma g^t v)dt.$$
 Summing over all such $\gamma$, we obtain
$$ |\{ \gamma \in \Gamma \, : \, d(\gamma x,y) \leq T, (\gamma \xi, \gamma \eta) \in K \}| \leq
\int_{-T-T_{x,y}}^{T+T_{x,y}} f(g^t \Gamma v) dt,$$
and the right hand side is bounded by $2(T+T_{x,y})C_0 \leq 4C_0T$.
\end{proof}

\bigskip
\noindent{\bf 2: Second and third steps}
\bigskip

 Let $f$ be a continuous function on $\overline{X}^2$. Define
 $$F(\zeta,x,T)=\frac{1}{|\Gamma x\cap B_T(y)|} \sum_{\gamma \in \Gamma, d(x,\gamma y)\leq T} f(\gamma \zeta,\gamma^{-1} x).$$
 Let $\epsilon>0$, then since $f$ is uniformly continuous, there exists a neighborhood $U$ of the diagonal in $\partial X^2$ such that for any $(\xi,\eta)\in U$ and any $z \in X$, $|f(\xi,z)-f(\eta,z)|\leq \epsilon$. Let $K$ be the complement of $U$, which is a compact subset of $\mathcal{G}$. So
 \begin{multline*}
  |F(\xi,x,T)-F(\eta,x,T)|\leq \frac{1}{|\Gamma x \cap B_T(y)|} \\
  *\left(  \sum_{\gamma \in \Gamma, d(x,\gamma y)\leq T, (\gamma \xi, \gamma \eta) \in U} \epsilon +
   \sum_{\gamma \in \Gamma, d(x,\gamma y)\leq T, (\gamma \xi, \gamma \eta) \in K} 2||f||_\infty \right),
\end{multline*}
 By Proposition 1, the last sum contains at most $O(T)$ terms, so for sufficiently large $T$,
 $$|F(\xi,x,T)-F(\eta,x,T)|\leq 2\epsilon.$$
 This proves that $F(\zeta,x,T)$ does not depend too much on $\zeta$ for large $T$, so for any $\zeta$, its value is close to the integral with respect to any probability measure. Fix $y$, it will then be sufficient to prove that the function
 $$L(T,x,y)=\int_{\partial X} F(\zeta,x,T)\frac{d\nu_{y}(\zeta)}{||\nu_y||},$$
 has limit $\frac1{||\nu_x||.||\nu_y||}\int fd\nu_x\nu_y$ as $T\rightarrow +\infty$; indeed, recall \cite{Roblin2003} that the orbital function satisfies
 $$|\Gamma x \cap B_T(y)|\sim \frac{||\nu_x||.||\nu_y||}{\delta ||m_\Gamma^{\BMS}||}e^{\delta T}.$$

  Define the map $g$ for any $z \in \Gamma y$ and any $x \in \overline{X}$ by:
 $$g(z,x)=\frac{1}{||\nu_y||}\int_{\partial X} f(\zeta,x)d\nu_z(\zeta),$$
 and extend $g$ when $z$ is in the limit set $\Lambda_\Gamma$, to be equal to $f(z,x)$. Then $g$ is continuous on $\overline{\Gamma y}\times \overline{X}$. By Tietze-Urysohn's Theorem, $g$ can be extended to a continuous function, still denoted by $g$, on $\overline{X}^2$, and moreover $\int gd\nu_xd\nu_y=\int fd\nu_xd\nu_y$.
 Then
 $$L(T,x,y)=\frac{1}{|\Gamma x\cap B_T(y)|.||\nu_y||} \sum_{\gamma \in \Gamma, d(x,\gamma y)\leq T} \int_{\partial X} f(\zeta,\gamma^{-1} x)d\nu_{\gamma y}(\zeta),$$
 $$=\frac{1}{|\Gamma x\cap B_T(y)|} \sum_{\gamma \in \Gamma, d(x,\gamma y)\leq T} g(\gamma y, \gamma^{-1}x),$$
 and by Roblin's Theorem applied to the function $g$, we conclude that $L(T,x,y)$ has limit $\frac1{||\nu_x||.||\nu_y||}\int fd\nu_xd\nu_y$ as $T\rightarrow +\infty$, as desired.\\

 {\it Acknowledgments}. Maucourant wishes to thank Thomas Roblin for suggesting improvements on the hypotheses.

\bibliographystyle{plain}

\end{document}